\newif\ifdraft\draftfalse
\newif\ifcite\citefalse
\newif\ifblow\blowtrue
\ifcite\usepackage{showkeys}\else\usepackage[notcite,notref]{showkeys}\fi\fi
\newtheorem{proposition}[equation]{Proposition}
\newtheorem{theorem}[equation]{Theorem}
\newtheorem{lemma}[equation]{Lemma}
\theoremstyle{definition}
\newtheorem{definition}[equation]{Definition}
\theoremstyle{remark}
\newtheorem{remark}[equation]{Remark}
\newtheorem{question}[equation]{Question}
\numberwithin{equation}{section}
\def\bc{\begin{cases}}
\def\ec{\end{cases}}
\def\ol{\overline}
\def\ul{\underline}
\def\a{\alpha}
\def\g{\gamma}
\def\th{\theta}
\def\t{\tilde}
\def\ch{{\mathcal H}}
\def\cl{{\mathcal L}}
\def\bc{{\mathbb C}}
\def\bp{{\mathbb P}}
\def\br{{\mathbb R}}
\def\bz{{\mathbb Z}}
\def\er{\eqref}
\def\bz{\mathbb Z}
\def\br{\mathbb R}
\def\bc{\mathbb C}
\def\bp{\mathbb P}
\def\lp2{L_pH_{2p}}
\def\bean{\begin{eqnarray}}
\def\eean{\end{eqnarray}}
\def\bea{\begin{eqnarray*}}
\def\eea{\end{eqnarray*}}
\def\beq{\begin{equation}}
\def\eeq{\end{equation}}
\def\beq*{\begin{equation*}}
\def\eeq*{\end{equation*}}
\def\bal{\begin{align*}}
\def\eal{\end{align*}}
\def\baln{\begin{align}}
\def\ealn{\end{align}}
\def\beg{\begin{gather*}}
\def\eng{\end{gather*}}
\def\bqu{\begin{question}}
\def\equ{\end{question}}
\def\nm{\nonumber}
\def\ban{\begin{proof}[Answer]}
\def\ean{\end{proof}}
\def\ra{\Rightarrow}
\def\p{\partial}
\def\on{\operatorname}
\def\bqu{\begin{question}}
\def\equ{\end{question}}
\def\0110{\begin{matrix} 0 & 1\\1&0\end{matrix}}
\def\t{\tilde}
\def\ban{\begin{proof}[Answer]}
\def\ean{\end{proof}}
\def\ben{\begin{equation}}
\def\een{\end{equation}}
\def\la{\langle}
\def\ra{\rangle}
\def\j1{{(j+1)}}
\def\f32{{}_3F_2}
\newcommand\lf{\lfloor}
\newcommand\rf{\rfloor}
\newcommand{\bnu}{\boldsymbol \nu}
\newcommand{\bsg}{\boldsymbol \sigma}
\newcommand{\bzt}{\boldsymbol \zeta}
\newcommand{\ginf}{\gamma_\infty}
\begin{document}

\title
{Toda systems and hypergeometric equations}

\author{Chang-Shou Lin}
\email{cslin@math.ntu.edu.tw}
\address{Department of Mathematics, Taida Institute of Mathematical Sciences, National Taiwan University, Taipei 106, Taiwan}

\author{Zhaohu Nie}
\email{zhaohu.nie@usu.edu}
\address{Department of Mathematics and Statistics, Utah State University, Logan, UT 84322-3900, USA}

\author{Juncheng Wei}
\email{jcwei@math.ubc.ca}
\address{Department of Mathematics, University of British Columbia, Vancouver, BC V6T 1Z2, Canada}

\subjclass[2010]{35J47, 33C20}
\keywords{Toda systems, singular sources, $W$-invariants, hypergeometric equations, interlacing conditions, monodromy, Pohozaev identity}

\begin{abstract} This paper establishes certain existence and classification results for solutions to $SU(n)$ Toda systems with three singular sources at 0, 1, and $\infty$. First, we determine the necessary conditions for such an $SU(n)$ Toda system to be related to an $n$th order hypergeometric equation. Then, we construct solutions for $SU(n)$ Toda systems that satisfy the necessary conditions and also the interlacing conditions from Beukers and Heckman \cite{BH}. Finally, for $SU(3)$ Toda systems satisfying the necessary conditions, we classify, under a natural reality assumption, that all the solutions are related to hypergeometric equations. This proof uses the Pohozaev identity.
\end{abstract}

\maketitle

\section{Introduction}

To a simple Lie group, there is associated a Toda system on the plane. 
The $SU(2)$ Toda system is just the classical Liouville equation \cite{Liouville}. 
The Toda systems and the Liouville equation arise in many physical and geometric problems. For example, in the Chern-Simons theory, the Liouville equation is related to the abelian gauge field theory, while the Toda systems are related to nonabelian gauges (see \cites{Yang, Taran}). On the geometric side, the Liouville equation is related to conformal metrics on the Riemann sphere $S^2$ with constant Gaussian curvature. The Toda systems are related to holomorphic curves in 
projective spaces \cite{Doliwa} and the Pl\"ucker formulas \cite{GH}, and the periodic Toda systems are related to harmonic maps \cite{Guest}. 

Singular sources can be introduced to the Toda systems, which correspond to conical singularities in conformal geometry and to vortices in gauge theory. 
The question of conformal metrics with conical singularities has been widely studied using various viewpoints. The works \cites{Troy, LT, CL} studied the solutions using the variational method by the Moser-Trudinger inequality.
In the case of three singularities on $S^2$, which can always be taken as 0, 1, and $\infty$ by a M\"obius transform,  
the works \cites{Erem, FUY} studied the existence and uniqueness of the conformal metrics with constant Gaussian curvature by analyzing the monodromy of the corresponding 2nd order hypergeometric equation. In this paper, we aim to generalize such a relationship to certain $SU(n)$ Toda systems with three singularities and $n$th order hypergeometric equations. 

We identify the plane $\br^2$ with the complex plane $\bc$ by $(x_1, x_2)\mapsto z = x_1 + i x_2$. Let $\bp^1 = \bc\cup \{\infty\}$ be the complex projective line. 
The $SU(n)$ Toda system with singular sources at $\{0, 1, \infty\}$ is the following system of semilinear elliptic PDEs on the plane in $(n-1)$ unknowns
\begin{equation}
\label{toda}
\begin{cases}
\displaystyle
\Delta u_i + 4\sum_{j=1}^{n-1} a_{ij} e^{u_j} = 4\pi (\gamma_{0,i} \delta_0 + \gamma_{1, i} \delta_1), & \gamma_{0, i}, \gamma_{1, i}>-1,\\
u_i (z) = -2\gamma_{\infty, i} \log|z| + O(1)\quad \text{near }\infty, & \gamma_{\infty, i} > 1,
\end{cases}
\end{equation}
where
$\delta_0$ and $\delta_1$ are the Dirac measure at 0 and 1, 
and the matrix $A = (a_{ij})$ is the Cartan matrix of $SU(n)$ of dimension $(n-1)$: 
\begin{equation}
\label{Cartan-A}
A =  \begin{pmatrix}
2 & -1 & & &\\
-1 & 2 & -1 & & \\
 & \ddots & \ddots & \ddots& \\
 & &-1 & 2 & -1\\
 & & & -1 & 2
 \end{pmatrix}.
 \end{equation}


One of our interests is to determine the range of the $\g_{\infty, i}$ in terms of the $\g_{0, i}$ and $\g_{1, i}$. 

When $n=2$, system \er{toda} becomes 
\begin{equation}\label{liouv}
\begin{cases}
\Delta u + 8 e^{u} = 4\pi (\gamma_0 \delta_0 + \gamma_1 \delta_1), & \gamma_0, \gamma_1 > -1,\\
u(z) = -2\gamma_\infty \log|z| + O(1)\quad \text{near }\infty, & \gamma_{\infty} > 1.
\end{cases}
\end{equation}
Geometrically the metric $4 e^u |dz|^2$ 
has constant Gaussian curvature 1 and conical singularities at 0, 1, and $\infty$  with positive angles 
$2\pi (\g_0 + 1)$, $2\pi (\g_1 + 1)$, and $2\pi (\g_\infty-1)$. 
The basic idea is that \er{liouv} is related to a second order Fuchsian ODE with regular singularities only at $0, 1$ and $\infty$. It is known that such ODEs can be completely classified by the Gauss hypergeometric equations. 
In \cite{Erem} and \cite{FUY}, they apply the monodromy theory of Gauss hypergeometric equations to completely solve the problem of the range of $\g_\infty$. 

System \er{toda} has the following version which is easier to study for many purposes. 
Set 
\begin{equation}
U_i = \sum_{j=1}^{n-1} a^{ij} u_j, \quad (a^{ij}) = A^{-1}.
\end{equation}
Then system \er{toda} is equivalent to  
\begin{equation}
\label{toda2}
\begin{cases}
\displaystyle
\Delta U_{i} + 4 e^{u_i} = 4\pi (\gamma^i_0 \delta_0 + \gamma^i_1\delta_1),\\
U_i (z) = -2\gamma^i_\infty \log|z| + O(1)\quad \text{near }\infty,
\end{cases}
\end{equation}
where 
\begin{equation}\label{def alpha}
\gamma^i_P = \sum_{j=1}^{n-1} a^{ij} \gamma_{P, j},\quad P\in V=\{0, 1, \infty\}.
\end{equation}
For later use, we also define 
\begin{align}
\label{mu}
\mu_{P, i} &= \gamma_{P, i} + 1 > 0, \quad P=0\text{ or }1,\\
\label{mu8}
\mu_{\infty, i} &= \gamma_{\infty, i} - 1 > 0. 
\end{align}

To any solution $U_i$ of \er{toda2}, there are associated the so-called $W$-invariants $W_j$, $2\leq j\leq n$, such that $f = e^{-U_1}$ satisfies 
an $n$th order complex ODE
\begin{equation}\label{Lin}
y^{(n)} + \sum_{j=2}^n W_j y^{(n-j)} = 0.
\end{equation}
This fact was proved in \cite{LWY} and \cite{BC}, and we will recall it in \er{ode}. The $W$-invariants $W_j$ are polynomials in the $\frac{\p^k U_i}{\p z^k}$ for $k\geq 1$ and are meromorphic functions in $\bc\cup \{\infty\}$ with 
poles only at $0, 1$ and $\infty$ (see Proposition \ref{prop-wj}). 

When $n\geq 3$, it is known that Fuchsian ODEs with three singular points can not be classified by just the local exponents at the singular points. But there is a class of the so-called hypergeometric equations which can be uniquely determined by its local exponents, that is, the monodromy transformations at 0, 1, and $\infty$ satisfy the \emph{local rigidity property}. 

With $\theta = z\frac{\p}{\p z}$, the $n$th order hypergeometric equation associated to parameters $\alpha=(\a_1, \dots, \a_n)$ and $\beta=(\beta_1, \dots,\beta_n)$ is 
\begin{equation}\label{dab}
D(\a; \beta)y = ((\theta + \beta_1 -1)\cdots (\theta + \beta_n -1) - z(\theta + \a_1) \cdots (\theta + \a_n)) y = 0.
\end{equation}
The local exponents of the hypergeometric equation are easily computed as  
\begin{align}
1-\beta_1, &\dots, 1-\beta_n &\text{at }z=0,\label{ghg 0}\\
\a_1, &\dots, \a_n,  &\text{at }z=\infty,\label{ghg 8}\\
0, 1, &\dots, n-2, \g=\sum_{j=1}^n \beta_j - \sum_{i=1}^n \a_i -1. &\text{at }z=1,\label{the c} 
\end{align}
We refer to \cite{BH} for the basic theory of hypergeometric equations. 

We remark that the ODE \er{Lin} associated to the Toda system needs a shift to become hypergeometric, because we at least need to ensure the appearance of 0 as a local exponent at $z=1$ in view of \er{the c}. Let $y(z)$ be a solution of \er{Lin}, and with a constant $\tau$, let 
$$
\t y(z) = (z-1)^\tau y(z).
$$
Then $\t y(z)$ also satisfies an $n$th order ODE.



Our first result gives the necessary conditions for the solutions of the $SU(n)$ Toda system \er{toda2} to be related to an $n$th order hypergeometric equation. 
\begin{theorem}\label{necessary} Assume that the $U_i$ are a set of solutions of \er{toda2}. 
If for some constant $\tau$, 
\begin{equation}\label{def gz}
g(z) = (z-1)^\tau e^{-U_1}
\end{equation}
satisfies 
a hypergeometric equation,
then the $\gamma_{1, i}$ 
and the corresponding $\tau$ belong to one of the following $n$ cases
\begin{align}
0)\ &\gamma_{1, 1}\neq 0,\quad \text{other }\gamma_{1, j}=0, && \tau= \gamma_1^1 -\mu_{1, 1}; \label{1neq0}\\
i)\ &\g_{1, i} + \g_{1, i+1} = -1,\ 1\leq i\leq n-2,\quad \text{other }\gamma_{1, j}=0, &&\tau=\g_1^1;\label{l&f}\\
n-1)\ &\g_{1, n-1}\neq 0,\quad\text{other }\gamma_{1, j}=0. &&\tau = \g_1^1.\label{cond0} 
\end{align}
\end{theorem}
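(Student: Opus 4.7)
The plan is to read off the local exponents at $z = 1$ of the $n$-th order ODE \eqref{Lin} and compare them, after the shift by $(z-1)^\tau$, against the rigidly prescribed list \eqref{the c} of hypergeometric local exponents at $z = 1$.

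The main computational step is to show that the indicial exponents of \eqref{Lin} at $z = 1$ are
\[
\lambda_i = -\gamma_1^1 + (i - 1) + \sum_{k=1}^{i-1} \gamma_{1, k}, \qquad 1 \leq i \leq n,
\]
with consecutive differences $\lambda_{i+1} - \lambda_i = \mu_{1, i}$, hence strictly increasing since $\mu_{1, i} > 0$. The value $\lambda_1 = -\gamma_1^1$ follows from the asymptotic $U_1 \sim 2\gamma_1^1 \log|z - 1|$ near $z = 1$, together with the fact that $e^{-U_1}$ is a Hermitian form in a basis of holomorphic solutions of \eqref{Lin} near $z = 1$, so that its leading $z$-behaviour realises the smallest indicial exponent. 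The remaining structure I would extract from the explicit principal parts of the $W_j$ at $z = 1$ provided by Proposition \ref{prop-wj}; the Fuchs trace relation $\sum_i \lambda_i = \binom{n}{2}$, which holds because \eqref{Lin} has no $y^{(n-1)}$-term, serves as a consistency check. This step carries the bulk of the technical work.

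The assumption that $g$ satisfies a hypergeometric equation, combined with the local rigidity of hypergeometric equations and the minimality of the annihilating $n$-th order ODE of $g$, forces the shifted ODE to coincide with the hypergeometric equation. Consequently the shifted exponents $\{\lambda_i + \tau : 1 \leq i \leq n\}$ must equal the set $\{0, 1, \ldots, n-2, \gamma\}$ from \eqref{the c}. Since the $\lambda_i$ are strictly increasing, there is a unique index $k$ with $\lambda_k + \tau = \gamma$, while the remaining $n - 1$ shifted exponents fill out $0, 1, \ldots, n - 2$ in order; in particular $\gamma \notin \{0, 1, \ldots, n-2\}$.

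The theorem is then recovered by a case split on $k$. When $k = 1$, the relations $\lambda_i + \tau = i - 2$ for $i \geq 2$ force $\mu_{1, i} = 1$, i.e., $\gamma_{1, i} = 0$, for every $i \geq 2$, and yield $\tau = -\lambda_2 = \gamma_1^1 - \mu_{1, 1}$; this is case 0). When $k = n$, the analogous relations $\lambda_i + \tau = i - 1$ for $i \leq n - 1$ force $\gamma_{1, i} = 0$ for every $i \leq n - 2$ and yield $\tau = -\lambda_1 = \gamma_1^1$; this is case $n-1)$. When $1 < k < n$, setting $i = k - 1$, the gap equality $\lambda_{k+1} - \lambda_{k-1} = 1$ gives $\mu_{1, i} + \mu_{1, i+1} = 1$ (equivalently $\gamma_{1, i} + \gamma_{1, i+1} = -1$), while the unit differences elsewhere force $\gamma_{1, j} = 0$ for $j \neq i, i+1$, again with $\tau = -\lambda_1 = \gamma_1^1$; this is case $i)$. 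The main obstacle lies in the first step, pinning down the indicial exponents at $z = 1$ from the $W$-invariants; once these are in hand, the remainder is an essentially combinatorial consequence of the very rigid integer pattern in \eqref{the c}.
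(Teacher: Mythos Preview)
Your proposal is correct and takes essentially the same approach as the paper. The paper obtains the exponents at $z=1$ as a separate result (Theorem~\ref{prop-exp}), computed directly from the factored form \eqref{my way} of the operator by replacing each $U_{i,z}$ with its principal part $\gamma^i_1/(z-1)$, which is cleaner than extracting them from Proposition~\ref{prop-wj} as you outline; the subsequent case split on which shifted exponent equals $\gamma$ is identical to yours up to indexing.
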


Let us now recall the concept of monodromy for a complex ODE in the unknown $y=y(z)$ on the complex plane $\bc$
\begin{equation}\label{gen ode}
y^{(n)} + a_1(z) y^{(n-1)} + \cdots + a_n(z) y = 0,
\end{equation}
whose coefficients $a_i(z)$ are meromorphic functions with poles at 0 and 1. 
Fix a base point $z_0\in \bc\setminus \{0, 1\}$, e.g. $z_0 = \frac{1}{2}$, and let $V$ be the $n$-dimensional vector space of local solutions of \er{gen ode} around $z_0$.  
Let $G = \pi_1(\bp^1\setminus V, z_0) = \pi_1(\bc\setminus \{0, 1\}, z_0)$ be the fundamental group,  and denote by 
\begin{equation}\label{monod}
\phi: G\to GL(V)
\end{equation}
the monodromy representation which is defined through analytic continuation of solutions. 
The most useful result in \cite{BH} to us is the following.

\begin{definition}\cite{B} We say that two sets of $n$ real numbers $\{\a_1, \dots, \a_n\}$ and $\{\beta_1, \dots, \beta_n\}$ \emph{interlace modulo $\bz$} if the set $\{\a_1 - \lf \a_1\rf, \dots, 
\a_n - \lf \a_n \rf\}$ and $\{\beta_1 - \lf \beta_1\rf, \dots, \beta_n - \lf \beta_n \rf\}$ interlace on $[0, 1)$, that is, the two sets are disjoint and their elements occur alternately in increasing order. 
\end{definition}

\begin{theorem}\cite{BH}\label{pos-inv-Herm} If the $\alpha_j$ and the $\beta_k$ are real and $\a_j-\beta_k\notin \bz$ for all $j, k=1,\dots, n$, then there is a positive-definite Hermitian form on $V$ invariant under the monodromy representation \er{monod} of the hypergeometric equation \er{dab} if and only if the $\a_j$ and the $\beta_k$ interlace mod $\bz$. 
\end{theorem}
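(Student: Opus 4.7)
The plan is to reduce the claim to a signature computation via irreducibility and Schur's lemma. First, the non-resonance hypothesis $\a_j-\beta_k\notin\bz$ guarantees that the hypergeometric equation \er{dab} is irreducible: if it were reducible, the characteristic polynomials of $h_0$ (with eigenvalues $e^{-2\pi i \beta_k}$) and $h_\infty^{-1}$ (with eigenvalues $e^{-2\pi i \a_j}$) would share a root. Reality of the parameters places all these eigenvalues on the unit circle and makes the characteristic polynomials real, so the complex conjugate of the monodromy representation is isomorphic to its inverse transpose; this isomorphism furnishes a nondegenerate sesquilinear form on $V$ invariant under monodromy. Its Hermitian symmetrization yields a nonzero invariant Hermitian form, unique up to a real scalar by Schur's lemma applied to the irreducible $\phi$.

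The task now is to compute the signature $(p,q)$ of this form and show that $q=0$ is equivalent to interlacing. The key structural input is \er{the c}: the local exponents at $z=1$ are $0, 1, \dots, n-2, \g$, so the monodromy around $1$, up to conjugation expressible as $h_0^{-1}h_\infty^{-1}$, acts as a complex pseudo-reflection with a fixed hyperplane and a single remaining eigenvalue $e^{-2\pi i \g}$. Writing $H(v,w)=v^* M w$ in an eigenbasis of $h_\infty$ and expanding the invariance equations under $h_0$ and $h_\infty$ produces an explicit Cauchy-like matrix $M$ whose entries are proportional to $(1-\bar a_j b_k)^{-1}$, where $a_j=e^{2\pi i\a_j}$ and $b_k=e^{-2\pi i\beta_k}$. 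The signature of such a matrix depends only on the cyclic order of the $a_j,b_k$ on the unit circle.

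I would finish by a deformation and induction argument. On the open locus $\{\a_j-\beta_k\notin\bz\}$ the form $H$ remains nondegenerate, so its signature is locally constant; the connected components (modulo integer translations of parameters, which do not alter monodromy) correspond bijectively to cyclic orderings of $\{a_j\}\cup\{b_k\}$ on the unit circle. One then inducts on $n$ using the pseudo-reflection around $1$: peeling off its fixed hyperplane and applying the Cauchy interlacing theorem for rank-one Hermitian perturbations shows that the signature is preserved or shifts by $(+1,-1)$ according to whether the newly inserted eigenvalue $e^{-2\pi i\g}$ falls between alternating $a_j,b_k$ or not, pinning the definite chamber down to the interlaced configuration. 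The main obstacle is carrying out this signature bookkeeping cleanly: the Cauchy interlacing step must be matched precisely with the combinatorics of the cyclic arrangement, and the non-resonance hypothesis is essential to ensure the induction never encounters a degenerate subform along the way.
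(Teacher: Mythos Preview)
The paper does not prove this theorem: it is stated as a quotation from \cite{BH} and is used as a black box in the proof of Theorem \ref{hpg exist}. There is therefore no proof in the paper to compare your attempt against.

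As a sketch of the original Beukers--Heckman argument your outline is broadly on target, but two points deserve tightening. First, your justification of irreducibility (``if it were reducible, the characteristic polynomials of $h_0$ and $h_\infty^{-1}$ would share a root'') is not self-evident: this is precisely the content of Levelt's rigidity theorem, and you should invoke it explicitly rather than treat it as an obvious consequence of reducibility. The same rigidity is what makes the complex conjugate representation isomorphic to the contragredient (both have local eigenvalues $a_j^{-1}$, $b_k^{-1}$, and rigidity says this determines the representation up to conjugacy), so it is doing real work at two places in your argument. Second, your signature computation is the weakest link: the deformation/Cauchy-interlacing scheme you describe is plausible but imprecise, and the actual argument in \cite{BH} proceeds by an explicit formula for the diagonal entries of the Hermitian matrix in a suitable basis, from which the signature is read off as a product of signs counting how many $b_k$ lie between consecutive $a_j$ on the circle. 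Your inductive peeling via the pseudo-reflection is a reasonable alternative strategy, but as written it is only a heuristic; you would need to make the rank-one perturbation step and its relation to the cyclic combinatorics fully explicit to close the argument.
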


Using this theorem, we have the following existence result. 
\begin{theorem}\label{hpg exist} Assume that the Toda system \er{toda} satisfies one of the necessary conditions in Theorem \ref{necessary} with $\tau$ determined correspondingly. 
Define
\begin{equation}
\label{abs}
\begin{split}
(\a_1,\a_2, \dots, \a_n) &= (-\tau-\gamma^{1}_{\infty}, -\tau-\gamma^{1}_{ \infty} + \mu_{\infty, 1},  \cdots, -\tau -\gamma^{1}_{ \infty} + \mu_{\infty, 1} + \cdots + \mu_{\infty, n-1}),\\
(\beta_1, \cdots, \beta_{n-1}, \beta_n) &= (1 +\g_0^1-\mu_{0, 1}  - \cdots - \mu_{0, n-1}, \dots, 1+\g_0^1-\mu_{0, 1}, 1+\g_0^1).
\end{split}
\end{equation}
If the $\alpha_j$ and the $\beta_k$ interlace mod $\bz$, then the Toda system \er{toda} has a solution. 
\end{theorem}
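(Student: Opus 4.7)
The strategy is to reverse the construction behind Theorem~\ref{necessary}: start with the hypergeometric equation $D(\alpha;\beta)y=0$ provided by \eqref{abs}, use Beukers--Heckman to endow its solution space with a monodromy-invariant unitary structure, and then pass to the associated osculating flag of the holomorphic curve $[y_1:\cdots:y_n]\subset\bp^{n-1}$ to build the Toda metric. The hypotheses of Theorem~\ref{pos-inv-Herm} are easy to verify: the parameters $\alpha_j,\beta_k$ in \eqref{abs} are real, since $\tau$ is a real linear combination of the $\gamma$'s in each of the three cases of Theorem~\ref{necessary}, and the assumed interlacing mod $\bz$ in particular forces the two sets to be disjoint mod $\bz$, i.e.\ $\alpha_j-\beta_k\notin\bz$ for all $j,k$. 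Thus there is a positive-definite Hermitian form $H$ on the $n$-dimensional solution space $V$ of $D(\alpha;\beta)y=0$ that is preserved by the monodromy representation $\phi$.

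Next I would pick an $H$-orthonormal basis $y_1,\dots,y_n$ of $V$, so that every monodromy matrix becomes unitary. For $1\le k\le n-1$ form the Wronskian-type function
$$\Delta_k(z,\bar z)=\det\Bigl(\textstyle\sum_{i=1}^n y_i^{(a-1)}(z)\,\overline{y_i^{(b-1)}(z)}\Bigr)_{1\le a,b\le k}=\sum_{|S|=k}|W_S(y)|^2,$$
which is real-analytic, single-valued, and strictly positive on $\bc\setminus\{0,1\}$. Define
$$U_k(z,\bar z)=-\log\Delta_k(z,\bar z)+c_{k,0}\log|z|^2+c_{k,1}\log|z-1|^2,\qquad u_i=\sum_{j}a_{ij}U_j,$$
with real constants $c_{k,0},c_{k,1}$ to be fixed in the singularity analysis. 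Away from $\{0,1\}$ the identity $\Delta U_k+4e^{u_k}=0$ is the classical Plücker/holomorphic-curve identity, valid whenever the $y_i$ satisfy a common $n$-th order holomorphic linear ODE in normal form ($W_1\equiv 0$); this is exactly the ODE \eqref{Lin} satisfied by $(z-1)^\tau y$ for $y$ solving $D(\alpha;\beta)y=0$, with $\tau$ chosen in Theorem~\ref{necessary} precisely so as to kill the $y^{(n-1)}$-coefficient.

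It remains to check that the singularities of $U_k$ agree with those prescribed by \eqref{toda2}. At $z=0$, the Frobenius basis satisfies $\phi_i(z)\sim z^{1-\beta_i}$, so by the Vandermonde identity $W_S(\phi)\sim z^{\sum_{i\in S}(1-\beta_i)-\binom{k}{2}}$ up to a nonzero constant, and the leading power of $\Delta_k$ is controlled by the minimum of $\sum_{i\in S}(1-\beta_i)$ over $|S|=k$. The ordering of the $\beta$'s in \eqref{abs} makes this minimum explicit, and once $c_{k,0}$ is chosen to cancel the trivial $-\binom{k}{2}$ contribution one obtains $U_k\sim\gamma^k_0\log|z|^2$, matching \eqref{def alpha} via the inverse Cartan matrix; the analysis at $\infty$ is entirely parallel using the $\alpha$-exponents.

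The delicate point---and the main obstacle---is the local analysis at $z=1$. There the Frobenius exponents are $0,1,\ldots,n-2$ together with a single non-integer $\gamma$ from \eqref{the c}, so the generic Vandermonde argument does not uniquely select a dominant subset and resonance corrections may intervene. The three cases \eqref{1neq0}, \eqref{l&f}, \eqref{cond0} of Theorem~\ref{necessary} correspond to different positions of $\gamma$ within the integer sequence, and the interlacing hypothesis is precisely what is required to ensure that $\gamma$ lands in the correct slot so that, after the $(z-1)^\tau$ shift and the choice of $c_{k,1}$, the resulting $U_k$ carries the prescribed $\gamma_{1,i}$. Checking this case-by-case matching at $z=1$ is the technical heart of the proof.
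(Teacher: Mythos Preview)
Your overall strategy is the same as the paper's: invoke Theorem~\ref{pos-inv-Herm} to get an invariant positive-definite Hermitian form, build the $U_k$ as (minus the log of) leading principal minors of the Gram matrix $W^*H^{-1}W$ of the Wronskian, use the Jacobi minor identity \eqref{Jacobi} for the Toda equations, and read off the local behavior from Frobenius bases. Two points deserve correction.

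First, the extra correction terms $c_{k,0}\log|z|^2+c_{k,1}\log|z-1|^2$ are a red herring and are where your sketch gets tangled. At $z=0$ no correction is needed: the sum of the $k$ smallest exponents in \eqref{easier} is exactly $-\gamma_0^k+\binom{k}{2}$, so the Vandermonde factor already cancels the $\binom{k}{2}$ and $-\log\Delta_k\sim 2\gamma_0^k\log|z|$ on the nose. At $z=1$ the paper simply works with $\bnu=(z-1)^{-\tau}\bsg$ from the outset, which puts the ODE in normal form ($W_1\equiv 0$) and makes $\det W(\bnu)$ constant; this is what forces $R_n=1$ and closes the system at $m=n-1$. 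Your version, forming $\Delta_k$ from the unshifted $y_i$ and then adding logs, is formally equivalent (one checks $c_{k,1}=k\tau$ is linear in $k$, so $u_k$ is unaffected), but obscures why the last Toda equation holds.

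Second, and more importantly, your anticipated ``case-by-case matching'' at $z=1$ is not needed, and your diagnosis that interlacing positions $\gamma$ in the correct slot is off. The role of interlacing at $z=1$ is only to guarantee $\gamma\notin\bz$, so that no logarithms appear. The key input you are missing is \cite{BH}*{Proposition~2.8}: for the hypergeometric equation the $n-1$ integer exponents $0,1,\dots,n-2$ at $z=1$ always admit genuinely analytic solutions $(z-1)^{j-1}+O((z-1)^{n-1})$, despite the integer resonances. With this, one has a clean formal basis $\bzt^1$ at $z=1$ exactly parallel to $\bzt^0$ and $\bzt^\infty$, and the same Vandermonde computation goes through uniformly. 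The matching of exponents with \eqref{exponents at 1} then follows directly from how $\tau$ was chosen in Theorem~\ref{necessary}, not from any further case analysis.
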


We note that the interlacing conditions for $\alpha$ and $\beta$ actually give a set of inequalities on the $\g_{\infty, i}$ in terms of the $\g_{0,i}$ and $\g_{1, i}$. 

Next we study under the necessary conditions in Theorem \ref{necessary}, whether all solutions of the Toda system are 
related to a hypergeometric equation. 
We are able to prove that this is the case for $SU(3)$ Toda systems under a natural reality assumption. 

\begin{theorem}\label{thm-su3} For $SU(3)$ Toda systems and under the assumption that the coefficient $F$ in the Fuchsian ODE \er{eqsu3} is real, 
the necessary conditions in Theorem \ref{necessary} are also sufficient, that is, the corresponding $g(z)$ in \er{def gz} satisfies a hypergeometric equation. 
\end{theorem}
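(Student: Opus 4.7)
The plan is to show that the third-order Fuchsian ODE \eqref{eqsu3} obtained from the $SU(3)$ Toda system coincides with a hypergeometric equation once the necessary conditions of Theorem \ref{necessary} hold. Under those conditions the local exponents of the ODE for $g(z)=(z-1)^\tau e^{-U_1}$ at $0,1,\infty$ are fully determined by the data $\gamma_{P,i}$ and the shift $\tau$. A third-order Fuchsian equation on $\bp^1$ with three regular singularities is however not rigid from its local exponents alone: once they are fixed there remains a single accessory parameter, which in the normalization of \eqref{eqsu3} is the coefficient $F$. So the whole theorem reduces to showing that $F$ is forced to equal the ``hypergeometric value'' $F_{\mathrm{hpg}}$ computed directly from \eqref{dab} with $(\alpha,\beta)$ given by \eqref{abs}.

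First I would write down the general form of $W_2$ and $W_3$ in the ODE \eqref{Lin}. Using Proposition \ref{prop-wj}, they are rational with pole of order at most $j$ at $0,1,\infty$; the indicial equations at each singular point then translate the specified local exponents into fixed principal parts of $W_2$ and $W_3$, leaving exactly one undetermined real parameter, which is $F$. The same indicial analysis applied to \eqref{dab} supplies $F_{\mathrm{hpg}}$ explicitly as a polynomial expression in the $\alpha_j, \beta_k$ of \eqref{abs}, and hence in the $\gamma_{P,i}$.

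The mechanism for pinning $F$ down is the Pohozaev identity applied to the Toda system \eqref{toda2}. Multiplying the equations for $U_1,U_2$ by suitable combinations of $x_k\,\partial_{x_l} U_i$ and integrating over $\br^2$ punctured by small disks around $0,1$ and a large disk at infinity, one sees that the bulk terms collapse after Stokes and only residue contributions at $0,1,\infty$ survive. These residues can be reorganized as local expansions of $W_2$ and $W_3$, and the resulting global identity constrains $F$. The reality of $F$ is exactly what promotes this to a scalar equation determining $F$ uniquely, rather than only fixing $\mathrm{Re}\,F$ or $\mathrm{Im}\,F$.

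The hardest step, and the one I expect to be the main obstacle, is the residue bookkeeping: one must expand $U_1,U_2$ near each of $0,1,\infty$ using the asymptotics in \eqref{toda2} to sufficiently high order to capture the $F$-dependence, and then identify the combination appearing in the Pohozaev identity as a nonzero multiple of $F-F_{\mathrm{hpg}}$, with the proportionality expressible in terms of the $\gamma_{P,i}$. Once equality $F=F_{\mathrm{hpg}}$ is established, the ODE satisfied by $g(z)$ matches \eqref{dab} in both local exponents and accessory parameter, hence coincides with the hypergeometric equation. The argument is carried out in parallel in each of the three cases \eqref{1neq0}, \eqref{l&f}, \eqref{cond0}, the combinatorics of $\gamma_{1,i}$ differing but the scheme being identical.
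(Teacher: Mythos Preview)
Your proposal is correct and follows essentially the same approach as the paper: the problem reduces to pinning down the single accessory parameter $F$, and this is done via a Pohozaev identity whose boundary contribution at $z=1$ (computed from refined asymptotics of $u,v$ there) is matched against the Laurent coefficient $F$ of $W_3$, with the reality assumption used to upgrade the resulting constraint on $\mathrm{Re}\,F$ to one on $F$ itself. The paper's only organizational difference is that it computes $\mathrm{Re}\,F$ and the Pohozaev boundary term $I_e$ separately in terms of the first-order Taylor coefficients $c,d$ of $u,v$ at $z=1$ and then combines them to obtain $F'=F-B\tau=0$, rather than directly identifying the Pohozaev residues with $W$-invariant coefficients as you describe.
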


\medskip
\noindent{\bf Acknowledgments.} We thank Dr. Wen Yang for his help with the analysis in Lemma \ref{lem-radial} and Proposition \ref{finer}. Z. Nie thanks the University of British Columbia, Wuhan University, and the National Taiwan University for hospitality during his visits in 2015 and 2016, where part of this work was done. He also acknowledges the Simons Foundation through Grant \#430297. The research of J. Wei is partially supported by NSERC of Canada.

\section{Fuchsian equations and hypergeometric equations}
\label{sect-Fuchs}

Our method of studying the solutions to the Toda system \er{toda2} is through a related complex ODE 
whose coefficients are the so-called $W$-invariants. This method was initiated in \cite{LWY} with some simplification from \cite{BC}. In Proposition \ref{prop-wj}, we show that 
the $W$-invariants take simple forms by our assumptions on the singular sources. As a simple consequence, the complex ODE 
is a Fuchsian equation with regular singularities at $0, 1$ and $\infty$. In Theorem \ref{prop-exp}, we compute the exponents of the equation at the singular points in terms of the strength of the singularities.  
At the end of this section, we prove the necessary conditions in Theorem \ref{necessary} for the Fuchsian ODE of our Toda system to be become a hypergeometric equation after a shift. 

By $\Delta = 4\p_z\p_{\bar z} = 4\frac{\p}{\p z}\frac{\p}{\p \bar z}$ (this $4$ is the origin of the slightly unconventional 4 at the left hand side of \er{toda}), the first equation in \er{toda2} is the same as 
\begin{equation}\label{U in zz}
U_{i,z \bar z} + \exp\Big(\sum_{j=1}^{n-1}  a_{ij} U_j\Big)  = 0\quad \text{on }\bc\setminus \{0, 1\}.
\end{equation}
It is this form that is usually called the Toda field theory \cite{LS-book}. 

For a set $(U_1, \cdots, U_{n-1})$ of solutions to the Toda system \er{toda2}, consider the following linear ordinary differential operator
\begin{equation}
\label{my way}
\begin{split}
\cl &= (\p_z - U_{n-1, z}) (\p_z + U_{n-1, z} - U_{n-2, z}) \cdots (\p_z + U_{2, z} - U_{1, z}) (\p_z + U_{1, z}) \\
&=  \p_z^{n} + \sum_{j=2}^n W_j \p_z^{n-j}.
\end{split}
\end{equation}
Let $f=e^{-U_1}$. Clearly by the last factor $ (\p_z + U_{1, z})$, we have 
\begin{equation}\label{ode}
\cl f = \Big(\p_z^n  + \sum_{j=2}^n W_j \p_z^{n-j}\Big) f= 0.
\end{equation}

The $W_j$ are polynomials of the derivatives of the $U_i$ with respect to $z$. By \cite{N2}, they have the crucial property that 
\begin{equation}\label{char prop}
W_{j, \bar z}=0, \quad 2\leq j\leq n,
\end{equation}
if the $U_i$ are solutions of the Toda system \er{toda2}. 
The $W_j$ are called characteristic integrals of the Toda system in \cite{N2}, and they are also called $W$-symmetries or $W$-invariants in \cite{LWY} in view of their relationship to the $W$-algebras  \cite{Feher}. 

For a differential monomial in the $U_i$, we call by its \emph{degree} the sum of the orders of differentiation multiplied by the algebraic degrees of the corresponding factors. For example, the $W$-invariant for the Liouville equation $U_{z\bar z} + e^{2U} = 0$ is $U_{zz} - U_{z}^2$, and it has a homogeneous degree $2$. 
From Eq. \er{my way}, we see that $W_j$ has a homogeneous degree $j$ for $2\leq j\leq n$. 

\begin{proposition}\label{prop-wj} The $W$-invariants $W_j$ of the Toda system \er{toda2} have the following form
\begin{equation}\label{Wj now}
W_j = \sum_{k=0}^j \frac{A_{j, k}}{z^{j-k} (z-1)^k},\quad 2\leq j\leq n,
\end{equation}
where $A_{j,k}$ are  constants. 
\end{proposition}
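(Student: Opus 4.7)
The plan is to show that, viewed as a function of $z$, each $W_j$ is a rational function on $\bp^1$ whose only poles lie at $0$ and $1$, of orders at most $j$, and which decays like $z^{-j}$ at $\infty$; a short basis expansion then puts it in the claimed form. The starting point is the characteristic property \er{char prop}: on the smooth locus $\bc\setminus\{0,1\}$ of the $U_i$ we have $W_{j,\bar z}\equiv 0$, so $W_j$ is holomorphic in $z$ there.

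To bound the singularities I would use the standard elliptic regularity for conical Liouville/Toda equations. Near $z=0$, equation \er{U in zz} forces an expansion
\bal
U_i(z,\bar z) = 2\gamma_0^i \log|z| + h_i^{(0)}(z,\bar z),
\eal
with $h_i^{(0)}$ smooth (indeed real-analytic) up to $z=0$; differentiating gives $\p_z^m U_i = \gamma_0^i (-1)^{m-1}(m-1)!\, z^{-m} + O(1)$ for $m\geq 1$. Since $W_j$ is a polynomial in the $\p_z^m U_i$ of homogeneous degree $j$, each constituent monomial has leading growth at worst $|z|^{-j}$, so $W_j(z)=O(|z|^{-j})$ as $z\to 0$ and hence has a pole of order at most $j$ at $z=0$. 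The analogous expansion at $z=1$ gives a pole of order at most $j$ there. Near $z=\infty$ there are no delta sources and $U_i=-2\gamma_\infty^i\log|z|+O(1)$; bootstrapping \er{U in zz} in the coordinate $w=1/z$ yields $\p_z^m U_i = O(z^{-m})$, whence $W_j(z)=O(z^{-j})$ at infinity.

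The classification step is then purely algebraic: a meromorphic function on $\bp^1$ with poles only at $0$ and $1$ of orders at most $j$ and a zero of order at least $j$ at $\infty$ is necessarily of the form $W_j=P(z)/(z^j(z-1)^j)$ with $\deg P\leq j$. The $j+1$ polynomials $\{z^k(z-1)^{j-k}\}_{k=0}^j$ are linearly independent (successively evaluate at $z=0$ and induct) and so form a basis of the polynomials of degree $\leq j$; writing $P(z)=\sum_{k=0}^j A_{j,k}\, z^k(z-1)^{j-k}$ and dividing by $z^j(z-1)^j$ yields the asserted formula. The main technical obstacle sits in the local regularity: one must verify that the ``smooth parts'' $h_i^{(0)}$, $h_i^{(1)}$, and $h_i^{(\infty)}$ are indeed $C^\infty$ (so that their $z$-derivatives remain bounded) for the full range of singularity strengths $\gamma_{P,i}>-1$ and $\gamma_{\infty,i}>1$. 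This is standard Liouville/Toda regularity theory, but it is where the analytic content of the proposition lives.
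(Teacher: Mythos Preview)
Your overall architecture matches the paper's: holomorphicity from \er{char prop}, pole-order bounds at $0$, $1$, $\infty$ from the homogeneity of $W_j$, then the basis expansion $P(z)=\sum A_{j,k}\,z^k(z-1)^{j-k}$. The algebraic step is identical.

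The gap is in the regularity claim. You assert that $h_i^{(0)}$ is $C^\infty$ (indeed real-analytic) at $z=0$, so that $\p_z^m U_i = \gamma_0^i(-1)^{m-1}(m-1)!\,z^{-m}+O(1)$. This is false in the range $-1<\gamma_{0,i}<0$: after subtracting the logarithmic part, the remainder $V_i$ satisfies $\Delta V_i = -4|z|^{2\gamma_{0,i}}|z-1|^{2\gamma_{1,i}}e^{\sum a_{ij}V_j}$, whose right-hand side is unbounded at $0$, so $V_i$ is only $C^{0,\alpha}$ there (Brezis--Merle), not $C^\infty$. In particular already $\p_z V_i$ can blow up when $\gamma_{0,i}<-\tfrac12$. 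What one actually gets from the integral representation (and what the paper uses, citing \cite{LWY}) is the weaker estimate
\[
\p_z^k V_i(z)=O\bigl(1+|z|^{2(\gamma_{0,i}+1)-k}\bigr)\quad\text{near }0,
\]
which, since $\gamma_{0,i}>-1$, still gives $\p_z^k U_i=O(|z|^{-k})$. That weaker bound is all you need for $W_j=O(|z|^{-j})$, so your argument survives once you replace ``$h_i^{(0)}$ is $C^\infty$, derivatives $O(1)$'' by ``$\p_z^k V_i=o(|z|^{-k})$ via Brezis--Merle plus the integral representation.'' The same correction applies at $z=1$; at $\infty$ the analogous estimate $\p_z^k V_i=O(|z|^{-k})$ also comes from the integral representation rather than from naive bootstrapping.
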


\begin{proof} This proof is an adaption of the proof in \cite{LWY} of the corresponding assertion in their Eq. (5.9). 
First, by a standard integration argument applied to \er{toda2} we have
\begin{equation}\label{integ}
\int_{\br^2} e^{u_i} \,dx = {\pi}(\gamma^i_0 + \gamma^i_1 + \gamma^i_\infty),\quad 1\leq i\leq n-1.
\end{equation}

	Following \cite{LWY}*{Eq. (5.10)}, introduce 
\begin{equation}\label{def V}
V_i = U_i - 2\gamma^i_0 \log |z| - 2\g^i_1 \log|z-1|, \quad 1\leq i\leq n-1.
\end{equation}
Then system \er{toda2} becomes
\begin{equation}\label{eq for V}
\begin{cases}
\displaystyle\Delta V_i = -4 |z|^{2\gamma_{0,i}} |z-1|^{2\g_{1, i}} \exp\Big(\sum_{j=1}^{n-1} a_{ij} V_j\Big),\\
\displaystyle \int_{\br^2} |z|^{2\gamma_{0,i}} |z-1|^{2\g_{1, i}} \exp\Big(\sum_{j=1}^{n-1} a_{ij} V_j\Big) \, dx <\infty,
\end{cases}
\end{equation}
where the integrals are finite in view of \er{integ}. 
As $\gamma_{P,i} > -1$ for $P=0,1$, applying Brezis-Merle's argument in \cite{BM}, we have that 
\begin{equation}\label{important}
V_i \in C^{0, \alpha}  \quad\text{on }\bc\text{ for some }\alpha \in (0, 1)
\end{equation} 
and that they are upper bounded over $\bc$. 
Therefore we can express $V_i$ by the integral representation formula, and we have from \cite{LWY}*{Eq. (5.11)}
\begin{equation}
\label{order of V}
\begin{split}
\p_z^k V_i(z) &= O(1+|z|^{2(\gamma_{0, i} + 1)-k}) \quad\text{ near } 0,\\
\p_z^k V_i(z) &= O(1+|z-1|^{2(\gamma_{1, i} + 1)-k}) \quad\text{ near } 1,\\
\p_z^k V_i(z) &= O(|z|^{-k}) \quad\text{ near } \infty,\quad \forall\, k\geq 1.
\end{split}
\end{equation}


Therefore from \er{def V}, we have 
\begin{equation}
\label{order of U}
\begin{split}
\p_z^k U_i(z) &= O(|z|^{-k}) \quad\text{ near } 0, 
\\
\p_z^k U_i(z) &= O(|z-1|^{-k}) \quad\text{ near } 1, 
\\
\p_z^k U_i(z) &= O(|z|^{-k}) \quad\text{ near } \infty,\quad \forall\, k\geq 1.
\end{split}
\end{equation}
	By $W_{j, \bar z} = 0$ from \er{char prop} and that $W_j$ has degree $j$ for $j\geq 2$, we see from the above estimates that $z^{j}(z-1)^{j} W_j$ has removable singularities at 0 and 1 and hence is an entire function. 
	The order of $ z^{j}(z-1)^{j} W_j$ at $\infty$ is $O(|z|^j)$, and so it is a polynomial of degree $j$. 
	As such, we have
$$
z^j(z-1)^j W_j = \sum_{k=0}^j A_{j, k} z^k (z-1)^{j-k},
$$
where the coefficients $A_{j, k}$ can be successively computed by expanding $(z-1)^{j-k}$. 
This proves \er{Wj now}. 
\end{proof}


\begin{theorem}\label{prop-exp} The ODE \er{ode} is Fuchsian with regular singularities at 0, 1, and $\infty$. 
Its exponents are expressed in terms of the $\gamma_{P, i}$ as in \er{exponents at 0}, \er{exponents at 1} and \er{exponents at 8}. 
\end{theorem}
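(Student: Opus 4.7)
The Fuchsian claim is immediate from Proposition \ref{prop-wj}: formula \er{Wj now} shows that $W_j$ has a pole of order at most $j$ at each of $0$ and $1$, while $W_j(z) = O(z^{-j})$ as $z \to \infty$ (every summand is $O(z^{-j})$ at infinity). These are precisely the Fuchs conditions ensuring that $0, 1, \infty$ are regular singular points of \er{ode}.

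To identify the exponents, I would exploit the factorization \er{my way}. The input is the logarithmic behavior of the $U_i$ at the three singular points. From the decomposition $V_i = U_i - 2\g_0^i \log|z| - 2\g_1^i \log|z-1|$ in \er{def V}, the regularity $V_i \in C^{0,\a}$, and the decay estimates \er{order of V}, one reads off $U_{i,z} = \g_0^i/z + o(z^{-1})$ near $0$, $U_{i,z} = \g_1^i/(z-1) + o((z-1)^{-1})$ near $1$, and $U_{i,z} = -\g_\infty^i/z + o(z^{-1})$ near $\infty$ (the last from the asymptotic $U_i = -2\g_\infty^i \log|z| + O(1)$ combined with \er{order of U}). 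Hence at each singular point every factor $\p_z + c_i$ in \er{my way} carries a simple pole whose residue is an explicit $\bz$-linear combination of the $\g_P^j$'s.

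With these polar parts identified, substituting $y = z^\rho$ near $0$ (resp.\ $(z-1)^\rho$ near $1$, $z^{-\rho}$ near $\infty$) into \er{my way} makes the indicial polynomial split as a product of $n$ linear factors in $\rho$: each Euler piece $\p_z + c/z$ sends $z^\rho \mapsto (\rho + c) z^{\rho-1}$, so iterating $n$ times gives $\prod_{k=1}^n(\rho - (k-1) + c_k^{\mathrm{pol}})\, z^{\rho - n}$ as the leading term. Reading off the $n$ roots yields the exponents as telescoping sums in the $\g_P^i$'s, matching \er{exponents at 0}, \er{exponents at 1}, \er{exponents at 8}. The one delicate point is that $c_i = U_{i,z} - U_{i-1,z}$ is not holomorphic in $z$, but its $\bar z$-dependence must cancel in the final product (since the $W_j$ are holomorphic in $z$ by \er{char prop}) and, in any event, contributes only to strictly lower-order terms in the indicial expansion thanks to \er{order of V}; so the Euler-factor calculation is the governing one.
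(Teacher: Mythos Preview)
Your proposal is correct and follows essentially the same route as the paper: the Fuchsian claim comes straight from Proposition~\ref{prop-wj}, and the exponents are read off by feeding the factored form \er{my way} only the polar parts $\g_P^i/(z-z_P)$ of $U_{i,z}$, with the estimates \er{order of V} guaranteeing that the remaining pieces contribute strictly below the top pole order and hence drop out of the indicial polynomial. The paper carries out the same reduction (``we can replace the $U_{i,z}$ in \er{my way} by $\gamma_0^i/z$'') and then computes with the Euler operator $\theta=z\p_z$ and the commutation $(\theta+a)\tfrac1z=\tfrac1z(\theta+a-1)$, which is exactly your iteration $(\p_z+c/z)\,z^\rho=(\rho+c)z^{\rho-1}$ rewritten; at $\infty$ the paper passes to $w=1/z$ rather than asserting $U_{i,z}=-\g_\infty^i/z+o(z^{-1})$ directly, but the two formulations are equivalent and rest on the same ``similar reason as above'' estimate for the regular part near infinity.
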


\begin{proof} By the forms of the $W_j$ in \er{Wj now}, the ODE \er{ode} has three regular singularities at 0, 1 and $\infty$ and is thus Fuchsian. Now we compute its exponents. 

By \er{def V}, we have 
\begin{equation}\label{uvz}
U_{i, z} = V_{i, z} + \frac{\g^i_0}{z} + \frac{\g^i_1}{z-1}.
\end{equation}
In the expansion \er{my way} to compute the ODE, the $W_j$ are polynomials of the $\p^k_z U_i$ for $k\geq 1$, and therefore they are polynomials involving $V_{i, z}$, $\frac{\g^i_0}{z}$, $\frac{\g^i_1}{z-1}$ and their derivatives. 

At point $z=0$, we claim that $\displaystyle \lim_{z\to 0} z^j W_j$ only depend on the terms of $W_j$ containing the $\frac{\g^i_0}{z}$ and their derivatives. 
Indeed, if any of the $V_{i, z}$ or $\frac{\g^i_1}{z-1}$ or their derivatives is involved, the pole order of the corresponding term is strictly less than $j$ in view of the estimate \er{order of V} where $\gamma_{0, i}>-1$ and the obvious reason. (Such an argument was also used in \cite{BC} for computing the exponents of the Cauchy-Euler equation with singularities only at 0 and $\infty$.)

Therefore when computing the exponents at 0, we can replace the $U_{i, z}$ in \er{my way} by $\frac{\gamma^i_0}{z}$, and the exponents of \er{ode} are the exponents of the following ODE 
\begin{equation}
\label{alpha}
\Big(\p_z - \frac{\gamma^{n-1}_{0}}{z}\Big) \Big(\p_z + \frac{\gamma^{n-1}_{0} - \gamma^{n-2}_{0}}{z}\Big) \cdots \Big(\p_z + \frac{\gamma^{2}_{0} - \gamma^{1}_{0}}{z}\Big) \Big(\p_z + \frac{\gamma^{1}_{0}}{z}\Big) h = 0
\end{equation}
for an unknown function $h(z)$. 
This is then 
$$
\frac{1}{z}\Big(\th - {\gamma^{n-1}_{0}}\Big) \frac{1}{z}\Big(\th + {\gamma^{n-1}_{0} - \gamma^{n-2}_{0}}\Big) \cdots \frac{1}{z}\Big(\th + {\gamma^{2}_{0} - \gamma^{1}_{0}}\Big) \frac{1}{z}\Big(\theta + {\gamma^{1}_{0}}\Big) h = 0.
$$
Using $(\theta+a)\frac{1}{z} = \frac{1}{z}(\theta+a-1)$, we see that this is 
$$
\frac{1}{z^{n}}\Big(\th - {\gamma^{n-1}_{0}} -(n-1)\Big) \Big(\th + {\gamma^{n-1}_{0} - \gamma^{n-2}_{0}}-(n-2)\Big) \cdots \Big(\th + {\gamma^{2}_{0} - \gamma^{1}_{0}}-1\Big) \Big(\theta + {\gamma^{1}_{0}}\Big) h = 0.
$$
Then the exponents are 
\begin{equation}\label{easier}
-\g_0^1, -\g_0^2 +\g_0^1+1, \cdots, -\g_0^{n-1} + \g_0^{n-2} + (n-2), \g_0^{n-1} + (n-1).
\end{equation}
The successive differences are seen to be $\mu_{0, i}$ from \er{mu}. 
Therefore the exponents of \er{ode} at 0 are, in an increasing order,  
\begin{equation}\label{exponents at 0}
-\gamma^{1}_{0}, -\gamma^{1}_{0} + \mu_{0, 1},  \cdots,  -\gamma^{1}_{0} + \mu_{0, 1} + \cdots + \mu_{0, n-1}. 
\end{equation}

Similarly, the exponents at $z=1$ are, in an increasing order,
\begin{equation}\label{exponents at 1}
-\gamma^{1}_{1}, -\gamma^{1}_{1} + \mu_{1, 1},  \cdots,  -\gamma^{1}_{1} + \mu_{1, 1} + \cdots + \mu_{1, n-1}. 
\end{equation}

The exponents at $z=\infty$ are computed by applying the transformation $w=\frac{1}{z}$ and the relation $\p_z = -w^2\p_w$. 
Let 
$$
\t U_i(w) = U_i \Big(\frac{1}{w}\Big) = U_i(z),\quad\text{and}\quad \t V_i(w) = \t U_i(w) - 2\g_\infty^i\log|w|.
$$
Then
$$
U_{i, z} = -w^2 \t U_{i, w} = -w^2\Big(\t V_{i, w} + \frac{\g_\infty^i}{w}\Big).
$$
When computing the exponents, by similar reason as above, we can disregard the terms $\t V_{i, w}$. 
Therefore the exponents of equation \er{my way} at $z=\infty$ are the exponents at $w=0$ of the equation 
\begin{equation*}
(-1)^n w^2\Big(\p_w - \frac{\gamma^{n}_{\infty}}{w}\Big) 
\cdots w^2 \Big(\p_w + \frac{\gamma^{2}_{\infty} - \gamma^{1}_{\infty}}{w}\Big) w^2\Big(\p_w + \frac{\gamma^{1}_{\infty}}{w}\Big) \t h = 0.
\end{equation*}
This is the same as 
$$
(-1)^n w^n\Big(w\p_w - \gamma^{n}_{\infty} + (n-1)\Big) 
\cdots \Big(w\p_w + \gamma^{2}_{\infty} - \gamma^{1}_{\infty} + 1\Big) \Big(w\p_w + {\gamma^{1}_{\infty}}\Big) \t h = 0.
$$
Therefore the exponents are 
\begin{equation}\label{easy at 8}
-\g_\infty^1, -\g_\infty^2 + \g_\infty^1 -1, \cdots, -\ginf^{n-1} + \ginf^{n-2} - (n-2),  \g_\infty^{n-1} - (n-1),
\end{equation}
which are again seen to be, in an increasing order, 
\begin{equation}\label{exponents at 8}
-\gamma^{1}_{\infty}, -\gamma^{1}_{ \infty} + \mu_{\infty, 1},  \cdots,  -\gamma^{1}_{ \infty} + \mu_{\infty, 1} + \cdots + \mu_{\infty, n-1}
\end{equation}
using the notation from \er{mu8}. 
\end{proof}

\begin{remark} Clearly the Fuchsian property and the local exponents in Theorem \ref{prop-exp} continue to hold for $SU(n)$ Toda systems with more singular points. 
\end{remark}

\begin{remark} The exponents of a Fuchsian equation satisfy the following Fuchs relation. Let $\rho_1(P), \dots, \rho_n(P)$ be the exponents at a singular point $P$, then 
\begin{equation}\label{Fuchs}
\sum_{P \text{ singular}} \left(\rho_1(P) + \cdots + \rho_n(P) - {n \choose 2}\right) = -2 {n\choose 2}.
\end{equation}
It is interesting to check this in our case. Indeed, by \er{easier} and \er{easy at 8}, 
we see that  
\begin{align*}
\rho_1(P) + \cdots + \rho_n(P) &= {n \choose 2}, \quad P = 0 \text{ or }1,\\
\rho_1(\infty) + \cdots + \rho_n(\infty) &= -{n \choose 2},
\end{align*}
\end{remark}

To be able to relate ODE \er{ode} with hypergeometric equations, we would like to shift  the local exponents at 1 and thus we consider $g(z)$ from \er{def gz} with some constant $\tau$. 
\begin{proposition} \begin{enumerate}
\item The $g(z)$ in \er{def gz} satisfies the following ODE
\begin{multline}\label{eq g}
\Big(\p_z - U_{n-1, z} - \frac{\tau}{z-1}\Big)\cdots
\Big(\p_z + U_{2, z} - U_{1, z} - \frac{\tau}{z-1}\Big) \Big(\p_z + U_{1, z} - \frac{\tau}{z-1}\Big) g = 0.
\end{multline}

\item The above ODE after expansion is the Fuchsian equation
\begin{equation}\label{still rational}
(\p_z^n + R_1(z) \p_z^{n-1} + \cdots + R_n(z)) g = 0,
\end{equation}
where 
\begin{equation}\label{the R}
R_j(z) = \sum_{k=0}^j \frac{B_{j, k}}{z^{j-k}(z-1)^{k}},
\end{equation}
with the $B_{j, k}$ constants.

\item The local exponents of this ODE at $1$ and $\infty$ are the shifts  by $\tau$ and $-\tau$ of the exponents of \er{my way} as in \er{new at 1} and \er{new at 8}. 
\end{enumerate}
\end{proposition}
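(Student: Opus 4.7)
The plan is to prove the three parts in sequence. For part (1), the key is the conjugation identity: for any function $h$ and any meromorphic $a(z)$,
\[
(\p_z + a(z))\bigl((z-1)^{-\tau} h\bigr) = (z-1)^{-\tau}\Bigl(\p_z + a(z) - \frac{\tau}{z-1}\Bigr) h,
\]
which is immediate from the product rule. Applying this identity factor-by-factor to $\cl$ in \er{my way}, starting from the innermost factor and working outward, yields the operator identity $\cl\bigl((z-1)^{-\tau} g\bigr) = (z-1)^{-\tau} M g$, where $M$ denotes the operator on the left-hand side of \er{eq g}. Since $f = (z-1)^{-\tau} g$ and $\cl f = 0$ by \er{ode}, it follows that $Mg = 0$, which is part (1).

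For part (2), I reinterpret the above as the operator identity $M = (z-1)^\tau \circ \cl \circ (z-1)^{-\tau}$. Expanding by Leibniz' rule, the coefficient $R_k(z)$ of $\p_z^{n-k}$ in $M$ becomes a finite linear combination, with constant coefficients depending on $n, k, \tau$, of terms of the form $W_j(z)\,(z-1)^{-(k-j)}$ for $0\leq j\leq k$ (with the convention $W_0=1$ and $W_1=0$). By Proposition \ref{prop-wj}, each $W_j$ with $j\geq 2$ has the form \er{Wj now}, so its pole orders at $0$ and $1$ are bounded by $j$ and it decays as $|z|^{-j}$ at infinity. Multiplication by $(z-1)^{-(k-j)}$ then produces a rational function with pole order at most $j\leq k$ at $0$, at most $j+(k-j)=k$ at $1$, and decay $|z|^{-k}$ at infinity. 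A short partial-fraction calculation shows that the rational functions on $\bc$ satisfying these three properties form precisely the $(k+1)$-dimensional space spanned by $\{z^{-(k-i)}(z-1)^{-i}:0\leq i\leq k\}$, so each $R_k$ has the form \er{the R}.

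For part (3), the shifts of local exponents follow directly from the substitution $g = (z-1)^\tau f$. A Frobenius solution $f \sim (z-1)^\rho$ at $z=1$ corresponds to $g \sim (z-1)^{\rho+\tau}$, giving a shift by $\tau$. Near infinity, the factorization $(z-1)^\tau = z^\tau(1-1/z)^\tau$, with the second factor holomorphic at $\infty$ and equal to $1$ there, shows that $f \sim z^{-\rho}$ (in the convention of \er{easy at 8}) corresponds to $g \sim z^{-(\rho-\tau)}$, a shift by $-\tau$; the exponents at $z=0$ are unchanged since $(z-1)^\tau$ is holomorphic and nonvanishing there. The main obstacle is the bookkeeping of pole orders and decay rates in part (2), but once Proposition \ref{prop-wj} is in hand this reduces to elementary rational-function estimates, so no genuinely new analytic input is required.
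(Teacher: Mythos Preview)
Your proof is correct and follows essentially the same approach as the paper: the conjugation identity $\p_z\circ (z-1)^{-\tau} = (z-1)^{-\tau}\bigl(\p_z - \tfrac{\tau}{z-1}\bigr)$ applied factor-by-factor for part (1), its iterate $\p_z^k\circ (z-1)^{-\tau} = (z-1)^{-\tau}\bigl(\p_z - \tfrac{\tau}{z-1}\bigr)^k$ combined with the form \er{Wj now} of the $W_j$ for part (2), and the evident shift of Frobenius exponents for part (3). One minor remark on part (2): your pole-order and decay bookkeeping followed by the partial-fraction characterization is correct but slightly indirect, since each term $W_j(z)\,(z-1)^{-(k-j)}$ with $W_j$ as in \er{Wj now} is already a linear combination of the monomials $z^{-(k-i)}(z-1)^{-i}$ (indeed $\frac{A_{j,l}}{z^{j-l}(z-1)^{l}}\cdot (z-1)^{-(k-j)}$ has exponents summing to $k$), so the claimed form \er{the R} can be read off without invoking the dimension count.
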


\begin{proof}
From the Lebniz rule, we know that as composition of operators, 
\begin{equation}\label{just one}
\p_z \circ (z-1)^{-\tau} = (z-1)^{-\tau} \Big(\p_z - \frac{\tau}{z-1}\Big).
\end{equation}
Therefore by \er{def gz} and the ODE \er{my way} satisfied by $f$, we have 
\begin{align*}
 0&=(\p_z - U_{n-1, z}) \cdots (\p_z + U_{2, z} - U_{1, z}) (\p_z + U_{1, z}) ((z-1)^{-\tau}  g) \\
 &= (z-1)^{-\tau} \Big(\p_z - U_{n-1, z} - \frac{\tau}{z-1}\Big)\cdots\\
 &\qquad\qquad\quad\ \, \Big(\p_z + U_{2, z} - U_{1, z} - \frac{\tau}{z-1}\Big) \Big(\p_z + U_{1, z} - \frac{\tau}{z-1}\Big) g,
\end{align*}
which implies Eq. \er{eq g}. 

Equation \er{still rational} is similarly obtained from \er{ode} and \er{Wj now} using the iteration of \er{just one} as
\begin{equation}\label{iter}
\p_z^k \circ (z-1)^{-\tau} = (z-1)^{-\tau} \Big(\p_z - \frac{\tau}{z-1}\Big)^k,
\end{equation}
which implies \er{the R} (see \er{shift-su3} for an example).  

Clearly,
the local exponents of \er{eq g} at $z=1$ are added by $\tau$, while those at $z=\infty$ by $-\tau$. More specifically, its local exponents at 1 and $\infty$ are 
\begin{gather}\label{new at 1}
\tau-\gamma^{1}_{1},\ \tau-\gamma^{1}_{1} + \mu_{1, 1},\cdots,\  \tau-\gamma^{1}_{1} + \mu_{1, 1} + \cdots + \mu_{1, n-1},\\
\label{new at 8}
-\tau-\gamma^{1}_{\infty}, -\tau-\gamma^{1}_{ \infty} + \mu_{\infty, 1},  \cdots, -\tau -\gamma^{1}_{ \infty} + \mu_{\infty, 1} + \cdots + \mu_{\infty, n-1}.
\end{gather}
\end{proof} 

\begin{proof}[Proof of Theorem \ref{necessary}]
We call the exponents in \er{new at 1} by 
$$
\rho_i = \tau - \g^1_1 + \sum_{j=1}^i \mu_{1, j},\quad 0\leq i\leq n-1.
$$
Since 0 is an exponent of a hypergeometric equation at $z=1$, we see that $\tau\in \br$. 
Then the $\rho_i$ 
are in an strictly increasing order.
Comparing these with the exponents in \er{the c}, we see that the $\g$ other than the integers $0, 1, \dots, n-2$ can be one of $\rho_i$. 

 If $\g=\rho_i$ for $1\leq i\leq n-1$, then we need to take $\tau = \g^1_1$ so that $\rho_0 = 0$. Moreover, we have 
$ \mu_{1, 1} + \cdots + \mu_{1, j} = j$ for $1\leq j\leq i-1$. This implies that $\gamma_{1, k} = 0$ for $1\leq k\leq i-1$. If $i<n-1$, we furthermore have that $\mu_1 + \cdots  + \mu_{j+1} = j$ for $j\geq i$. This implies that $\gamma_{1, i} + \g_{1, i+1} = -1$ and $\g_{1, j}=0$ for $j>i+1$. Therefore we have shown cases in \er{l&f} and \er{cond0}. 

If $\g=\rho_0$, then we have $\rho_j = j-1$ for $j\geq 1$. Therefore we need to take $\tau = \gamma_1^1 - \mu_{1, 1}$, and all the $\gamma_j=0$ for $j\geq 2$. 
This is case \er{1neq0}. 
\end{proof}

\section{Existence of solutions}

In this section, we prove Theorem \ref{hpg exist}, which asserts the existence of solutions for certain Toda systems \er{toda} with three singularities at $0, 1$ and $\infty$. 

\begin{proof}[Proof of Theorem \ref{hpg exist}] We will construct the $U_i$ for $1\leq i\leq n-1$ as real-valued and single-valued functions on $\bc\setminus \{0, 1\}$, prove that they satisfy the equation \er{U in zz}, and show that they have the right strength of singularities at $0, 1$ and $\infty$ as in \er{toda2}. 

With the $\alpha$ and $\beta$ defined as in \er{abs}, we consider the hypergeometric equation \er{dab}. 

By a result from \cite{BH} stated here as Theorem \ref{pos-inv-Herm} and our interlacing assumption, there exists a positive-definite Hermitian form $\ch(\cdot, \cdot)$ on the vector space $V$ of local solutions around $z_0=\frac{1}{2}$ that is invariant under the 
monodromy representation \er{monod}. Note that $\ch$ can be replaced by $\lambda \ch$ for $\lambda>0$. 

We choose a basis $(\sigma_1, \cdots, \sigma_n)$ of such local solutions. For $\eta\in \pi_1(\bp^1\setminus V, z_0)$, define its associated matrix $M(\eta)$ under \er{monod} with respect to the above basis by 
\begin{equation}\label{def M}
\phi(\eta) (\sigma_j) = \sum_{i=1}^n M(\eta)_{ij} \sigma_i, \quad j=1,\cdots, n.
\end{equation}
Let $H$ be the Hermitian matrix representing $\ch(\cdot,\cdot)$ with 
 $H_{ij} = \ch(\sigma_i, \sigma_j)$ for $1\leq i,j\leq n$. By the invariance of $\ch$, 
 \begin{multline*}
 H_{ij} = \ch(\sigma_i, \sigma_j) = \ch(\phi(\eta)(\sigma_i), \phi(\eta)(\sigma_j)) \\
 = \ch\Big(\sum_{k=1}^n M(\eta)_{ki} \sigma_k, \sum_{l=1}^n M(\eta)_{lj}\sigma_l\Big) = \sum_{k,l=1}^n M(\eta)_{ki} H_{kl} \ol{M(\eta)_{lj}},
 \end{multline*}
so 
$$
M(\eta)^t H \ol{M(\eta)} = H, \quad \forall \eta\in \pi_1(\bp^1\setminus V, z_0).
$$

Therefore the positive-definite Hermitian matrix $H^{-1}$ satisfies
\begin{equation}\label{inv for H1}
\ol{M(\eta)} H^{-1} M(\eta)^t = H^{-1},\quad \forall \eta\in \pi_1(\bp^1\setminus V, z_0).
\end{equation}
We write $H^{-1} = (H^{ij})$. 

The local solutions $\sigma_i$ can be extended to be functions on the universal covering of $\bp^1\setminus V$. 
With $\bsg = (\sigma_1, \cdots, \sigma_n)^t$, we consider the function 
$$
\bsg^* H^{-1} \bsg = \sum_{i, j=1}^n H^{ij} \ol{\sigma_i} \sigma_j,
$$
where $\bsg^* = \ol \bsg^t$ is the conjugate transpose. 
Then this function is invariant under the deck transformation of $\pi_1(\bp^1\setminus V, z_0)$ on the universal cover of $\bp^1\setminus V$, since by \er{def M} and \er{inv for H1}
$$
\phi(\eta) \Big(\sum_{i, j=1}^n H^{ij} \ol{\sigma_i} \sigma_j\Big) = \sum_{i, j, k, l=1}^n H^{ij} \ol{M(\eta)_{ki}} M(\eta)_{lj} \ol{\sigma_k} \sigma_l = \sum_{k, l=1}^n H^{kl} \ol{\sigma_k} \sigma_l.
$$
Therefore it descends to a function on $\bp^1\setminus V = \bc\setminus \{0, 1\}$. 

Now define
\begin{equation*}\label{U1}
U_1(z) = -\log( |z-1|^{-2\tau} (\bsg^* H^{-1} \bsg) ), \quad z\in \bc\setminus \{0, 1\}.
\end{equation*}
We note that the logarithm is taken for positive numbers and we take the real-valued branch. 

Define the vector of functions on the universal covering of $\bp^1\setminus V$
\begin{equation}\label{def nu}
\bnu = (\nu_1, \dots, \nu_n) := (z-1)^{-\tau} \bsg.
\end{equation}
It is clear from the above that $\bnu^* H^{-1} \bnu$ is single-valued on $\bc\setminus \{0, 1\}$, and 
\begin{equation}\label{u1nu}
U_1 = -\log(\bnu^* H^{-1} \bnu).
\end{equation}

To be able to write out the other $U_i$ in terms of $U_1$, we denote by $W=W(\bnu)$ the Wronskian matrix of $\bnu$ with respect to $z$, that is, 
\begin{equation}\label{Wrons}
W = \begin{pmatrix}
\bnu &
\bnu' &
\cdots &
\bnu^{(n-1)}
\end{pmatrix}.
\end{equation}
Now consider the Hermitian matrix 
\begin{equation}\label{big R}
R :=  {W}^* H^{-1} {W},
\end{equation}
where $W^* = \ol W^t$.  
Then \er{u1nu} says that 
$e^{- U_1} = R_{1,1}.$

In general, for a matrix $M$ of dimension $n$ and for $1\leq m\leq n$, 
let $M_m = M_{\ul{m}, \ul{m}}$ denote the leading principal minor of $M$ of dimension $m$, where $\ul{m}$ denotes the set $\{1, \cdots, m\}$. 

Because $H^{-1}$ is positive-definite, so is $R$. Therefore, $R_m>0$. 
It is standard (see, e.g., \cites{LS-book, N1, LWY}) that 
\begin{equation}\label{Jacobi}
R_{m}\big(\p_z\p_{\bar z} R_{m}\big) - \big(\p_z R_{m}\big)\big(\p_{\bar z} R_{m}\big)= R_{m-1} R_{m+1},\quad 1\leq m\leq n-1,
\end{equation}
where we define $R_{0} = 1$. 
By induction, we see that the $R_m$ are single-valued on $\bc\setminus \{0, 1\}$. 

Therefore defining 
\begin{equation}\label{Um}
U_m = -\log R_m, \quad 1\leq m\leq n-1,
\end{equation}
we see that the $U_m$ satisfy \er{U in zz} for $1\leq m\leq n-2$ with the Cartan matrix from \er{Cartan-A}. It is also clear that all solutions to \er{U in zz} with the given $U_1$ in \er{u1nu} are determined in this way. 

Now we show that \er{U in zz} for $m=n-1$ is satisfied if we replace $H$ by $\lambda H$ with a suitable scalar $\lambda>0$. For this, we will show that $R_{n} 
 = 1$ for a suitable $\lambda$. 

First, we show that $\det W(\bsg) = c(z-1)^{n\tau}$, where $c\neq 0\in \bc$. The hypergeometric equation \er{dab} has the following expansion 
{\allowdisplaybreaks
\begin{align*}
& (\theta+\beta_1-1)\cdots(\theta+\beta_n-1) - z(\theta + \a_1)\cdots(\theta + \a_n)\\
&= \bigg(\theta^n + \Big(\sum_{i=1}^n \beta_i - n\Big)\theta^{n-1} + \cdots\bigg) - z\bigg(\theta^n + \Big(\sum_{i=1}^n \a_i\Big)\theta^{n-1} + \cdots\bigg)\\
&= (1-z) \bigg(\theta^n + \Big(\sum_{i=1}^n \a_i + \frac{\sum_{i=1}^n \a_i - \sum_{i=1}^n \beta_i + n}{z-1}\Big) \theta^{n-1} + \cdots\bigg)\\
&\overset{(*)}= (1-z) \bigg(\theta^n + \Big(-n\tau - \frac{n(n-1)}{2} - \frac{n\tau}{z-1}\Big) \theta^{n-1} + \cdots\bigg)\\
&\overset{(**)}= (1-z) \bigg(z^n\frac{\p^n}{\p z^n} - \Big(\frac{n\tau z}{z-1}\Big) z^{n-1}\frac{\p^{n-1}}{\p z^{n-1}} + \cdots\bigg)\\
&= (1-z)z^n \bigg(\frac{\p^n}{\p z^n} - \Big(\frac{n\tau}{z-1}\Big) \frac{\p^{n-1}}{\p z^{n-1}} + \cdots\bigg),
\end{align*}
}
where dots stand for linear differential operators of order $\leq n-2$. 
Here equality $(*)$ uses the definition \er{abs} of the $\a$ and $\beta$, and the fact that 
$$
n\g_P^1 = (n-1)\g_{P, 1} + \cdots + \g_{P, n-1}, \quad P\in V
$$
from \er{def alpha} and the inverse of \er{Cartan-A}. 
Equality $(**)$ uses the identity 
$$
\theta^n = z^n \frac{\p^n}{\p z^n} + \frac{n(n-1)}{2} z^{n-1}\frac{\p^{n-1}}{\p z^{n-1}} + \cdots,
$$
which can be proved easily by induction. 
Therefore $\det W(\bsg)=c(z-1)^{n\tau}$ by Abel's theorem. 

By \er{def nu}, $\det W(\bnu)= (z-1)^{-n\tau} \det W(\bsg) = c$ is a nonzero constant. We can chose the multiple of $H$ so that 
$$
R_n = \det R = (\det (\lambda H)^{-1}) |\det W(\bnu)|^2 = 1. 
$$

Now we show that the solutions $U_m$ in \er{Um} have the right strength of singularities at the three singular points. By \er{ghg 0} and \er{abs}, the exponents of the hypergeometric equation for $\bsg$ at $z=0$ are, in a strictly increasing order, given in \er{exponents at 0} or equivalently \er{easier}. 
Under the interlacing condition, they are distinct modulo $\bz$. 
By the standard theory of ODEs (see \cite{Ince}) and \er{def nu}, there exists a basis of formal functions near 0 
$$
{\bzt^0} = (z^{-\g_0^1}\zeta_1^0, z^{-\g_0^2 + \g_0^1 + 1}\zeta_2^0, \dots, z^{-\g^{n-1}_0 + \g^{n-2}_0 + n-2} \zeta_{n-1}^0, z^{\g^{n-1}_0 + n-1} \zeta_n^0)^t,
$$
where the $\zeta_i^0$ are analytic functions at 0 and $\zeta_i^0(0)\neq 0$, such that
$$
\bnu = A^0 \bzt^0,
$$
for some  invertible constant matrix $A^0$. 

A standard calculation shows that the Wronskian of a sequence of power functions is 
$$
\det W(z^{\kappa_1}, \cdots, z^{\kappa_m}) = z^{\sum_{i=1}^m \kappa_i - \frac{m(m-1)}{2}} \prod_{i<j} (\kappa_j - \kappa_i),
$$
where the $\kappa_i$ are the exponents. 
Therefore for $1\leq m\leq n-1$ and from \er{big R}, 
\begin{align*}
R_m &= (W(\bzt^0)^* (A^0)^* H^{-1} A^0 W(\bzt^0))_m 
= ((A^0)^* H^{-1} A^0)_m |W(\bzt^0)_m|^2 (1 + o(1)) \\
&= |z|^{-2\g_0^m}(c_m^0 + o(1)) ,\quad \text{as }z\to 0,\text{ where }
c_m^0 >0.
\end{align*}
In view of \er{Um}, $U_m$ has the required strength of singularity at $z=0$ in \er{toda2}.

Corresponding to $z=\infty$, we use $w=\frac{1}{z}$ as the local coordinate and consider $w=0$. 
Then as above, \er{ghg 8}, \er{abs}, \er{easy at 8} and \er{def nu} give $\bnu = A^\infty \bzt^\infty$ for some invertible matrix $A^\infty$ and 
$$
{\bzt^\infty} = (w^{-\g_\infty^1}\zeta^\infty_1, w^{-\g_\infty^2 + \g_\infty^1 + 1}\zeta^\infty_2, \dots, w^{-\g^{n-1}_\infty + \g^{n-2}_\infty + n-2} \zeta^\infty_{n-1}, w^{\g^{n-1}_\infty + n-1} \zeta^\infty_n)^t,
$$
where the $\zeta^\infty_i$ are analytic functions in $w$ at $w=0$ with nonzero constant terms. 
The same arguments as above apply and give 
$$
e^{-U_m} = |w|^{-2\g_\infty^m} (c_m^\infty + o(1)) = |z|^{2\g_\infty^m} (c_m^\infty + o(1)), \quad \text{as }w\to 0\text{ or }z\to \infty,
$$
which shows the required singularity at $\infty$ in \er{toda2}. 

At last, we consider the point $z=1$. First we show that under the interlacing condition in Theorem \ref{hpg exist}, the $\g=\sum_{j=1}^n \beta_j - \sum_{i=1}^n \a_i -1$ in \er{the c} is not an integer. 
Indeed, mod $\bz$, we see $0<\g - \lf \g\rf <1$ by the interlacing condition. Therefore by the standard ODE theory, there exists a local formal solution 
$(z-1)^\g \zeta^1_{n}$ around $z=1$ with $\zeta^1_n$ analytic with nonzero constant term. 

By \cite{BH}*{Proposition 2.8}, we see that the hypergeometric equation \er{dab} has $n-1$ analytic solutions near $z=1$ of the form
\begin{align*}
(z-1)^{j-1}\zeta^1_j(z) &= (z-1)^{j-1} + O((z-1)^{n-1}),\quad j=1, \dots, n-1,
\end{align*}
corresponding  to the exponents $0, 1, \dots, n-2$ in \er{the c}. 
Therefore despite that the exponents of \er{dab} at 1 are not distinct mod $\bz$, we still have a basis of formal solutions with the corresponding exponents. 

By \er{def nu} and the proof of the necessary conditions in Theorem \ref{necessary}, we see that around $z=1$, $\bnu = A^1 \bzt^1$ with $A^1$ invertible and 
\begin{multline*}
{\bzt^1} = ((z-1)^{-\g_1^1}\zeta^1_1, (z-1)^{-\g_1^2 + \g_1^1 + 1}\zeta^1_2, \cdots,\\ (z-1)^{-\g^{n-1}_1 + \g^{n-2}_1 + n-2} \zeta^1_{n-1}, (z-1)^{\g^{n-1}_1 + n-1} \zeta^1_n)^t,
\end{multline*}
where $\zeta^1_i$ are analytic and $\zeta^1_i(1)\neq 0$. 
Then a similar analysis as above applies.  
\end{proof}

\section{Sufficiency for $SU(3)$ Toda systems by Pohozaev identity}\label{sect-su3}

We note again that for a Fuchsian equation of order $n\geq 3$ with singularities at $0$, $1$ and $\infty$, having just the right exponents at $z=1$ as in \er{the c} doesn't guarantee that it is actually a hypergeometric equation. 
Indeed, a general equation as \er{gen ode} is hypergeometric if it is Fuchsian with regular singularities at 0, 1, and $\infty$, and furthermore all the coefficients $a_i(z)$ have \emph{simple} poles at $z=1$ \cite{BH}. 
It would be ideal if the {necessary conditions} in Theorem \ref{necessary} are also {sufficient} for equation \er{eq g} to be hypergeometric if the $U_i$ satisfy a Toda system. 
In this section, we prove Theorem \ref{thm-su3} for the sufficiency for $SU(3)$ Toda system by a Pohozaev identity under some mild reality assumption. 
We would also like to call attention to the relationship of our current results, in terms of estimates and integrals, with those of 
\cite{KMPS} where higher dimensional equations related to the scalar curvature are studied. 

For simplicity and concreteness, we write the Toda system \er{toda2} for $SU(3)$ as
\begin{equation}\label{su3}
\begin{cases}
\Delta u + 4 e^{2u-v} = 4\pi (\gamma_0^u \delta_0 + \gamma_1^u \delta_1), 
\\
\Delta v + 4 e^{2v-u} = 4\pi (\gamma_0^v \delta_0 + \gamma_1^v \delta_1), 
\\
u(z) = -2\gamma_\infty^u \log|z| + O(1)\quad \text{near }\infty, 
\\
v(z) = -2\gamma_\infty^v \log|z| + O(1)\quad \text{near }\infty, 
\end{cases}
\end{equation}
where $u=U_1$, $v=U_2$, $\gamma^u_P = \gamma^1_P$, and $\gamma^v_P = \gamma^2_P$. 
We recall that
\begin{equation}\label{gmgm}
\gamma_{1, 1} = 2\gamma_1^u - \g_1^v>-1,\quad \g_{1, 2}=2\g_1^v - \g_1^u>-1.
\end{equation} 
We have from \er{integ} that 
\begin{equation}\label{su3int}
\begin{split}
4\int_{\br^2} e^{2u-v} &= 4\pi(\gamma_0^u + \gamma_1^u + \gamma_\infty^u),\\
4\int_{\br^2} e^{2v-u} &= 4\pi(\gamma_0^v + \gamma_1^v + \gamma_\infty^v).
\end{split}
\end{equation}

Our goal is to use the Pohozaev identity to study the ODE operator \er{my way} for $f=e^{-u}$: 
\begin{equation}\label{eqsu3}
\begin{split}
&(\p_z - v_{z}) (\p_z + v_{z} - u_{z}) (\p_z + u_{z}) = \p_z^3 + W_2\p_z + W_3\\
&= \p_z^3 + \Big(\frac{A}{z^2} + \frac{B}{z(z-1)} + \frac{C}{(z-1)^2}\Big)\p_z + \Big(\frac{D}{z^3} + \frac{E}{z^2(z-1)}
+ \frac{F}{z(z-1)^2} + \frac{G}{(z-1)^3}\Big),
\end{split}
\end{equation}
where we have used Proposition \ref{prop-wj} and we have used shorthands for the coefficients $A_{j, k}$. 
We first want to determine these seven coefficients in terms of the six strength of singularities in \er{su3} as much as possible. 
The following formulas are equivalent to the exponent information in \er{exponents at 0}, \er{exponents at 1}, and \er{exponents at 8}. 
\begin{proposition}\label{6of7}
 We have the following formulas
\begin{align}
C &= -\g_1^u - \g_1^v - (\g_1^u)^2 - (\g_1^v)^2 + \g_1^u \g_1^v,\label{for AC}\\
G &= 2\g_1^u + 2(\g_1^u)^2 - \g_1^u\g_1^v + (\g_1^u)^2\g_1^v - \g_1^u(\g_1^v)^2,\label{the G}
\end{align}
and similar formulas for $A$ and $D$ with the $\g_1$'s replaced by the $\g_0$'s.
Furthermore, we have that 
\begin{align}
A + B + C + 2 &= \rho_1\rho_2 + \rho_1 \rho_3 + \rho_2 \rho_3,\label{for B}\\
D + E + F + G &= \rho_1\rho_2\rho_3,\label{EF}
\end{align}
where 
\begin{equation}\label{the rho8}
\rho_1 = -\ginf^u,\ \rho_2 = \ginf^u - \ginf^v - 1,\ \rho_3 = \ginf^v -2.
\end{equation} 
\end{proposition}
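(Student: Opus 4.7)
The plan is to derive each of the six relations by writing down the indicial equation of the Fuchsian ODE \eqref{eqsu3} at the three singular points $0$, $1$, and $\infty$, and then applying Vieta's formulas with the local exponents already computed in Theorem \ref{prop-exp}. Since the ODE has the form $y''' + W_2 y' + W_3 y = 0$, only the deepest-pole piece of $W_2$ and $W_3$ at each singular point enters the indicial polynomial, so each sought identity becomes a symmetric function of three local exponents.

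Near $z=0$, substituting a formal solution $y = z^\rho$ yields the indicial equation $\rho(\rho-1)(\rho-2) + A\rho + D = 0$, because the $B/(z(z-1))$ and $C/(z-1)^2$ contributions to $W_2$ (and likewise $E, F, G$ in $W_3$) have strictly lower pole order at $0$. The three roots, read off from \eqref{easier}, are $-\gamma_0^u$, $\gamma_0^u - \gamma_0^v + 1$, and $\gamma_0^v + 2$; Vieta's formulas then extract $A$ as their second elementary symmetric function minus $2$, and $D$ as minus their product, and expanding gives the $\gamma_0$-analogues of \eqref{for AC} and \eqref{the G}. The case $z=1$ is handled identically after translation: the indicial equation becomes $\rho(\rho-1)(\rho-2) + C\rho + G = 0$ with roots $-\gamma_1^u$, $\gamma_1^u - \gamma_1^v + 1$, $\gamma_1^v + 2$, and the same calculation produces \eqref{for AC} and \eqref{the G} directly.

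For $z=\infty$ I would use $w = 1/z$ together with the Euler identities $z^3\partial_z^3 = \theta(\theta-1)(\theta-2)$ and $z\partial_z = \theta$, rewriting the ODE multiplied by $z^3$ as
\[
\theta(\theta-1)(\theta-2)y + (z^2 W_2)\,\theta y + (z^3 W_3)\,y = 0.
\]
The explicit forms of $W_2$ and $W_3$ give $\lim_{z\to\infty} z^2 W_2 = A+B+C$ and $\lim_{z\to\infty} z^3 W_3 = D+E+F+G$. Passing to $\theta_w = -\theta$ and substituting $\tilde y = w^\sigma$ produces the indicial equation
\[
\sigma^3 + 3\sigma^2 + (2 + A + B + C)\sigma - (D + E + F + G) = 0,
\]
whose roots are $\rho_1, \rho_2, \rho_3$ from \eqref{the rho8}. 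Vieta's formulas then immediately deliver \eqref{for B} and \eqref{EF}, while the automatic relation $\rho_1+\rho_2+\rho_3 = -3$ is simply the Fuchs relation at infinity. The main obstacle is purely bookkeeping—verifying the two polynomial expansions in the exponents at $1$ (and at $0$)—and the structural content is that six of the seven coefficients $A, B, C, D, E, F, G$ are pinned down by the three indicial polynomials, the remaining degree of freedom reflecting the fact that a Fuchsian equation with three regular singularities and prescribed exponents need not be hypergeometric.
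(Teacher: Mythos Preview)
Your proposal is correct and follows essentially the same route as the paper. The paper computes $C$ and $G$ (and analogously $A$, $D$) by directly expanding the Cauchy--Euler-type operator $\bigl(\partial_z - \tfrac{\gamma_1^v}{z-1}\bigr)\bigl(\partial_z + \tfrac{\gamma_1^v-\gamma_1^u}{z-1}\bigr)\bigl(\partial_z + \tfrac{\gamma_1^u}{z-1}\bigr)$, whereas you extract them via Vieta from the indicial polynomial using the known exponents; these are two equivalent bookkeeping choices for the same computation. Your treatment at $z=\infty$, leading to the indicial equation $\sigma^3+3\sigma^2+(A+B+C+2)\sigma-(D+E+F+G)=0$, is exactly what the paper does.
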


\begin{proof}
The coefficients $C$ and $G$ are for the highest order poles at $z=1$ in $W_2$ and $W_3$ in \er{eqsu3}, and they are equivalent to the information about the exponents at 1 in \er{exponents at 1}. In fact, the $C$ and $G$ are computed by expanding
$$
\Big(\p_z - \frac{\g_1^v}{z-1}\Big) \Big(\p_z + \frac{\g_1^v-\g_1^u}{z-1}\Big) \Big(\p_z + \frac{\g_1^u}{z-1}\Big).
$$
Similarly we determine $A$ and $D$ at $z=0$. 

Now we consider the exponents at $\infty$. By \er{easy at 8}, they are exactly the $\rho_i$ in \er{the rho8}. 
By the standard way of computing the exponents at $\infty$ (see \cite{Ince}), these exponents are the roots of the equation
$$
-s(s+1)(s+2) - (A+B+C)s + D+E+F+G = 0,
$$
which is 
$$
-(s^3 + 3s^2 + (A+B+C+2) s - (D+E+F+G)) = 0.
$$
Therefore the standard relationship between roots and coefficients gives \er{for B} and \er{EF}. 
\end{proof}

Now we introduce the Pohozaev identity in our situation. 
We denote the standard dot product by $\la\cdot, \cdot\ra$ and use the standard notation that $x=(x_1, x_2)$ and $\nabla u = (\p_{x_1} u, \p_{x_2} u)$. 

\begin{lemma} Let $\Omega\subset \bc\setminus \{0, 1\}$ be a domain with $C^1$ boundary. 
We have the following Pohozaev identity
\begin{equation}\label{pz}
\begin{split}
8\int_\Omega e^{2u-v} + e^{2v-u} &= \int_{\p \Omega} \,2\la x, \nabla u\ra \la \nu, \nabla u\ra - \la x, \nu\ra |\nabla u|^2\\
& + 2\la x, \nabla v\ra \la \nu, \nabla v\ra - \la x, \nu\ra |\nabla v|^2\\
&-  \la x, \nabla v\ra \la \nu, \nabla u\ra - \la x, \nabla u \ra \la \nu, \nabla v\ra\\
&+ \la x, \nu\ra \la\nabla u, \nabla v\ra + 4\la x, \nu\ra (e^{2u-v} + e^{2v-u}),
\end{split}
\end{equation}
where $\nu$ stands for the outward unit normal vector of $\p \Omega$. 
\end{lemma}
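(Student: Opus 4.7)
The plan is to derive the identity by choosing multipliers dictated by the Cartan-matrix structure of the Toda system. Writing the system in the form $\Delta U_i + 4e^{u_i} = 0$ with $u_1 = 2u-v$ and $u_2 = 2v-u$, the natural Pohozaev multiplier for the $i$-th equation is $\langle x,\nabla u_i\rangle$, because this is exactly the combination that converts the nonlinearity into a pure divergence of $e^{u_i}$. Concretely, I would multiply the $u$-equation by $2\langle x,\nabla u\rangle - \langle x,\nabla v\rangle$ and the $v$-equation by $2\langle x,\nabla v\rangle-\langle x,\nabla u\rangle$, integrate over $\Omega$, and add.

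For the nonlinear parts, the Leibniz rule gives $4e^{2u-v}\langle x,\nabla(2u-v)\rangle = 4\langle x,\nabla e^{2u-v}\rangle$, so integrating and using $\mathrm{div}\,x = 2$ together with the divergence theorem yields $-8\int_\Omega e^{2u-v} + 4\int_{\partial\Omega}\langle x,\nu\rangle e^{2u-v}$, and similarly for the other exponential. These combine to produce the left-hand side $8\int_\Omega(e^{2u-v}+e^{2v-u})$ after moving to the other side, together with the $4\langle x,\nu\rangle(e^{2u-v}+e^{2v-u})$ boundary contribution in \er{pz}.

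For the Laplacian parts, I would split into diagonal and cross terms. The diagonal terms $\int_\Omega 2(\Delta u)\langle x,\nabla u\rangle$ and $\int_\Omega 2(\Delta v)\langle x,\nabla v\rangle$ are handled by the standard Pohozaev integration by parts: each contributes a boundary expression of the form $\int_{\partial\Omega}\bigl(2\langle x,\nabla u\rangle\langle\nu,\nabla u\rangle - \langle x,\nu\rangle|\nabla u|^2\bigr)$, using the identity $\langle\nabla(\langle x,\nabla u\rangle),\nabla u\rangle = |\nabla u|^2 + \tfrac12\langle x,\nabla|\nabla u|^2\rangle$ and integrating the gradient by parts. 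For the cross terms $\int_\Omega (\Delta u)\langle x,\nabla v\rangle + (\Delta v)\langle x,\nabla u\rangle$, integration by parts produces two interior pieces; the key observation is that the second-derivative pieces symmetrize into $\sum_j x_j\partial_{x_j}\langle\nabla u,\nabla v\rangle$, so a second divergence-theorem step makes the bulk integrals cancel and leaves only $\int_{\partial\Omega}\bigl(\langle\nu,\nabla u\rangle\langle x,\nabla v\rangle + \langle\nu,\nabla v\rangle\langle x,\nabla u\rangle - \langle x,\nu\rangle\langle\nabla u,\nabla v\rangle\bigr)$.

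Assembling these boundary contributions, multiplying by the appropriate factors from the original multipliers, and rearranging signs produces precisely \er{pz}. No regularity issue arises since $u,v$ are smooth on $\Omega\subset\bc\setminus\{0,1\}$ and $\partial\Omega$ is $C^1$. The main computational nuisance is keeping track of the coefficients and signs when mixing the two multipliers, particularly the cancellation in the cross terms; the underlying reason this cancellation occurs is the symmetry of the Cartan matrix, so the same scheme extends to $SU(n)$ with little conceptual change.
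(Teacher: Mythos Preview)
Your proposal is correct and follows essentially the same approach as the paper: you use the same multipliers $\langle x, 2\nabla u - \nabla v\rangle$ and $\langle x, 2\nabla v - \nabla u\rangle$, and the only difference is organizational---the paper packages all terms as divergences in one step before applying the divergence theorem, whereas you handle the nonlinear, diagonal, and cross pieces separately via integration by parts. The content is the same.
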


\begin{proof} The proof is by standard manipulations. For concreteness, we provide some details. 
Multiply the first equation in \er{su3} by $\la x, 2\nabla u - \nabla v\ra$ and the second by $\la x, 2\nabla v - \nabla u\ra$ and add them, then we get in $\Omega$ 
\begin{align*}
0&= \Delta u \la x, 2\nabla u - \nabla v\ra + \Delta v \la x, 2\nabla v - \nabla u\ra + 4\la x, \nabla e^{2u-v} + \nabla e^{2v-u}\ra \\
&= \on{div}(\la x, 2\nabla u - \nabla v\ra \nabla u) + \on{div}(\la x, 2\nabla v - \nabla u\ra \nabla v) \\
&- \on{div}((|\nabla u|^2 - \la \nabla u, \nabla v\ra + |\nabla v|^2) x)\\
&+ 4\on{div}((e^{2u-v} + e^{2v-u}) x) - 8(e^{2u-v} + e^{2v-u}).
\end{align*}
Then we apply the divergence theorem. 
\end{proof}

\begin{lemma} We have
\begin{equation}\label{itie}
\begin{split}
8\int_{\br^2} e^{2u-v} + e^{2v-u} &= 8\pi(((\gamma_\infty^{u})^2 + (\gamma_\infty^{v})^2 - \gamma_\infty^{u}  \gamma_\infty^{u}) \\
& - ((\gamma_0^{u})^2 + (\gamma_0^{v})^2 - \gamma_0^{u}  \gamma_0^{u})\\
& - ((\gamma_1^{u})^2 + (\gamma_1^{v})^2 - \gamma_1^{u}  \gamma_1^{u})) - I_e,
\end{split}
\end{equation}
where $e=(1, 0)\in \br^2$ (corresponding to $1\in \bc$) and 
\begin{equation}\label{ie}
\begin{split}
I_e &= \lim_{r\to 0} \int_{S_r(1)} 2\la e, \nabla u\ra \la \nu, \nabla u\ra - \la e, \nu\ra |\nabla u|^2\\
&+ 2\la e, \nabla v\ra \la \nu, \nabla v\ra - \la e, \nu\ra |\nabla v|^2\\
&- \la e, \nabla v\ra \la \nu, \nabla u\ra - \la e, \nabla u \ra \la \nu, \nabla v\ra\\
&+ \la e, \nu\ra \la\nabla u, \nabla v\ra + 4\la e, \nu\ra (e^{2u-v} + e^{2v-u})\, ds.
\end{split}
\end{equation}
\end{lemma}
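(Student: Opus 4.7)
The plan is to apply the Pohozaev identity \er{pz} to the annular domain
$$\Omega_{R,\epsilon,r} := B_R(0)\setminus(\overline{B_\epsilon(0)}\cup \overline{B_r(1)})$$
and then let $R\to\infty$ and $\epsilon,r\to 0$. By the integrability already recorded in \er{su3int}, the bulk integral on the left of \er{pz} converges to $8\int_{\br^2}(e^{2u-v}+e^{2v-u})$, so the content of \er{itie} is the computation of the three boundary pieces in the limit. Throughout, I would exploit the fact that each of the eight terms in the boundary integrand of \er{pz} contains exactly one factor of $\nu$, so flipping the orientation of $\nu$ reverses the sign of every term uniformly.

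For the circle at infinity, I would use $u(z)=-2\g_\infty^u\log|z|+O(1)$ (and similarly for $v$) together with the corresponding gradient expansion $\nabla u \sim -2\g_\infty^u\, x/|x|^2$ implicit in \er{order of U}. The pure quadratic-in-$\nabla u$ terms contribute $8\pi(\g_\infty^u)^2$, the pure $\nabla v$ terms $8\pi(\g_\infty^v)^2$, while the three cross terms $-\la x,\nabla v\ra\la\nu,\nabla u\ra-\la x,\nabla u\ra\la\nu,\nabla v\ra+\la x,\nu\ra\la\nabla u,\nabla v\ra$ add up to $-8\pi\g_\infty^u\g_\infty^v$. The exponential boundary term $4\la x,\nu\ra(e^{2u-v}+e^{2v-u})$ is $O(R^{1-2\g_{\infty,1}})$ pointwise on $\partial B_R(0)$, hence contributes $O(R^{-2\mu_{\infty,\bullet}})\to 0$ by \er{mu8}. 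This yields the $+8\pi((\g_\infty^u)^2+(\g_\infty^v)^2-\g_\infty^u\g_\infty^v)$ piece of \er{itie}.

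For the small circle around $0$, I would use the decomposition $U_i=2\g_0^i\log|z|+V_i$ from \er{def V}, with the outward-to-$\Omega$ normal $\nu=-x/\epsilon$. The singular-singular terms give exactly $-8\pi((\g_0^u)^2+(\g_0^v)^2-\g_0^u\g_0^v)$ by the same bookkeeping as at $\infty$ (with the overall sign flipped through $\la x,\nu\ra=-\epsilon$). The key estimate is that every cross term involving at least one factor of $\nabla V_i$ is $O(\epsilon^{2\mu_{0,i}-1})$ on $\partial B_\epsilon(0)$ by \er{order of V} and \er{mu}, so after integrating against the arc-length $2\pi\epsilon$ it is $O(\epsilon^{2\mu_{0,\bullet}})\to 0$; similarly the exponential term is $O(\epsilon^{2\mu_{0,1}})$ and vanishes. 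For the small circle around $1$, the crucial step is to write $x=e+(x-e)$ throughout \er{pz}. The $(x-e)$-piece is literally the Pohozaev identity centered at $1$, and by the same analysis as at $0$ (now with $U_i-2\g_1^i\log|z-1|$ playing the role of $V_i$) contributes $-8\pi((\g_1^u)^2+(\g_1^v)^2-\g_1^u\g_1^v)$. The remaining $e$-piece is exactly the integrand in the definition \er{ie}, but evaluated with $\nu=-(x-e)/r$ rather than the outward normal $(x-e)/r$ of $S_r(1)$; since every term of the integrand has exactly one $\nu$, this piece equals the negative of the $I_e$ integrand and contributes $-I_e$ in the limit.

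Summing the three boundary contributions reproduces \er{itie}. The main obstacle is the uniform power-counting on $\partial B_\epsilon(0)$ and $\partial B_r(1)$ showing that every non-leading term vanishes in the limit; this is where one must carefully invoke the decay rates from \er{order of V} together with the positivity of all the $\mu_{P,i}$ from \er{mu}, \er{mu8}, which in turn use the hypotheses $\g_{P,i}>-1$ for $P=0,1$ and $\g_{\infty,i}>1$.
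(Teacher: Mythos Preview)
Your proposal is correct and follows essentially the same route as the paper: apply \er{pz} to $B_R(0)\setminus(B_\epsilon(0)\cup B_r(1))$, send $R\to\infty$ and $\epsilon,r\to0$, compute the three boundary contributions from the leading logarithmic asymptotics, and on $S_r(1)$ split $x=(x-e)+e$ so that the $(x-e)$-piece reproduces the $z=0$ computation and the $e$-piece gives $-I_e$. The only minor remark is that your pointwise bound $O(\epsilon^{2\mu_{0,i}-1})$ for $\nabla V_i$ should strictly read $O(1+\epsilon^{2\mu_{0,i}-1})$ from \er{order of V}; the paper simply packages this as $\nabla u=\frac{2\g_0^u}{r}\nu+o(r^{-1})$, which is all that is needed and streamlines the power-counting.
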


\begin{proof} We apply \er{pz} to the domain $\Omega = B_R(0) \backslash (B_r(0) \cup B_r(1))$, and take the limit as $R\to \infty$ and $r\to 0$. Then we get 
$$
8\int_{\br^2} e^{2u-v} + e^{2v-u} = I_\infty - I_0 - I_1,
$$
where $I_\infty$, $I_0$ and $I_1$ are respectively limits of integrals over $S_R(0)$, $S_r(0)$ and $S_r(1)$. 
We first consider the integral $I_0$. Then $|z| = r$, and $x = r \nu$. Note that by \er{def V} and \er{order of V} we have 
\begin{equation}\label{easy est}
\begin{split}
u &= 2\g_0^u \log r + O(1),\\
\nabla u &= \frac{2\g_0^u}{r}\nu + o(r^{-1}), \quad \text{as }r\to 0,
\end{split}
\end{equation}
and similarly for $v$. Therefore we have 
\begin{align*}
I_0 &= \lim_{r\to 0}\int_0^{2\pi} r^2( ((2\g_0^u)^2 + (2\g_0^v)^2 - (2\g_0^u)(2\g_0^v)) r^{-2}\\
&\qquad\qquad\quad + o(r^{-2}) + O(r^{2\g_{0, 1}}) + O(r^{2\g_{0, 2}}))\, d\theta\\
&= 8\pi((\g_0^{u})^2 + (\g_0^{v})^2 - \g_0^u \g_0^v),
\end{align*}
by \er{gmgm}. 

We apply similar estimates to $S_R(0)$ and $S_r(1)$ where we split $x = (x-e) + e$. Therefore we are done with one new integral $I_e$ as above.   
\end{proof}

To be able to evaluate $I_e$, we need finer asymptotics (at 1) about the solutions than \er{easy est}. 
We thank Wen Yang for his help with the following lemma and proposition.
\begin{lemma}\label{lem-radial}
Let $\kappa>-1$. For $z \in B_{\frac{1}{2}}(1)\subset \bc$, 
$$
-\frac{1}{2\pi}\int_{B_{\frac{1}{2}(1)}} \log|z-w| |w-1|^{2\kappa}\, dA = a |z-1|^{2\kappa+2} + b,
$$
for some constants $a$ and $b$, where $dA=\frac{i}{2}dw\wedge d\bar w$ is the area element. 
\end{lemma}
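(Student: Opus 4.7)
\smallskip

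The plan is to exploit the translation and rotational symmetry of the integrand to reduce the statement to a routine radial ODE. First I would make the change of variables $w \mapsto w + 1$ and $z\mapsto z + 1$, so that the assertion becomes
\[
V(z) := -\frac{1}{2\pi}\int_{B_{1/2}(0)}\log|z-w|\,|w|^{2\kappa}\, dA(w) = a|z|^{2\kappa+2} + b
\]
for $z\in B_{1/2}(0)$. The integrand has an integrable singularity at $w=0$ since $\kappa>-1$, so $V$ is well-defined and continuous on $B_{1/2}(0)$.

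Next I would observe that the density $|w|^{2\kappa}$ and the disc $B_{1/2}(0)$ are invariant under the rotation $w\mapsto e^{i\theta}w$. Substituting this rotation into the integral and using $|z-e^{i\theta}w|=|e^{-i\theta}z-w|$ shows $V(z)=V(e^{-i\theta}z)$, so $V$ is radial. Writing $V(z)=U(r)$ with $r=|z|$, the fact that $\frac{1}{2\pi}\log|z|$ is the fundamental solution of $\Delta$ on $\br^2$ yields
\[
\Delta V(z) = -|z|^{2\kappa}\quad\text{in }B_{1/2}(0),
\]
which in polar form becomes the ODE $U''(r)+\frac{1}{r}U'(r) = -r^{2\kappa}$ on $(0,1/2)$.

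I would then solve this ODE by rewriting it as $(rU')' = -r^{2\kappa+1}$, which integrates (using $\kappa\neq -1$) to $rU'(r) = -\frac{r^{2\kappa+2}}{2\kappa+2}+C$, and hence
\[
U(r) = -\frac{r^{2\kappa+2}}{(2\kappa+2)^2} + C\log r + D.
\]
Finally, since $\kappa>-1$ guarantees $V(0)$ is finite and $V$ is continuous at the origin, the constant $C$ must vanish. This gives the desired conclusion with $a=-(2\kappa+2)^{-2}$ and $b=D$. There is no real obstacle; the only point that requires care is the boundedness of $V$ near $0$ needed to eliminate the logarithmic term, which follows immediately from the integrability of $\log|w|\cdot|w|^{2\kappa}$ when $\kappa>-1$.
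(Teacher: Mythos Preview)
Your argument is correct and follows essentially the same approach as the paper: reduce to a radial function by rotational symmetry, then invoke the potential-theoretic ODE to obtain $U(r)=ar^{2\kappa+2}+C\log r+D$ and use boundedness at the origin to force $C=0$. In fact you supply more detail than the paper, which simply appeals to ``standard potential analysis'' after noting radiality and records the same value $a=-\frac{1}{(2\kappa+2)^2}$.
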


\begin{proof} We first note that the integral on the LHS is convergent for all $z\in B_{\frac{1}{2}}(1)$, and we denote the corresponding function by $f(z)$. 
Let $r=|z-1|$. We then note that $f(z) = f(r)$ is a radial function about $1$ by the symmetry of $|w-1|^{2\kappa}$ and the invariance of the Lebesgue measure under rotation. 
By standard potential analysis, we see that 
$$
f(x) = f(r) = ar^{2\kappa + 2} + b,
$$
where $a$ can be computed as $-\frac{1}{(2\kappa + 2)^2}$. 
\end{proof}



\begin{proposition}\label{finer}
For solutions $u$ and $v$ of \er{su3}, we have, with  $r=|z-1|$, that
\begin{equation}\label{asymp}
\begin{split}
u(z) &= 2\gamma_1^u\log r + p(r) + 
c(x_1-1) + \t c x_2+ o(r),\\
v(z) &= 2\gamma_1^v\log r + q(r) + 
d(x_1-1) + \t d x_2 + o(r), \quad \text{as }z\to 1,
\end{split}
\end{equation}
where $p$ and $q$ are functions of $r$ and we have 
\begin{equation}\label{radial part}
\begin{split}
p(r) &= O(1), \quad p'(r) = o(r^{-1}),\\
q(r) &= O(1), \quad q'(r) = o(r^{-1}),\quad \text{as }r\to 0.
\end{split}
\end{equation}
\end{proposition}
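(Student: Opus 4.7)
The plan is to analyze $u$ near $z=1$; the argument for $v$ is symmetric. Set $V(z) := u(z) - 2\gamma_1^u \log|z-1|$ and $V_v(z) := v(z) - 2\gamma_1^v\log|z-1|$. By \eqref{important}, both $V$ and $V_v$ are H\"older continuous of some exponent $\alpha>0$ near $z=1$, and the first equation of \eqref{su3} becomes
\[
\Delta V = -4|z-1|^{2\gamma_{1,1}} F(z), \qquad F(z) := e^{2V(z) - V_v(z)},
\]
where $\gamma_{1,1} = 2\gamma_1^u - \gamma_1^v > -1$ and $F \in C^{0,\alpha}$ with $F(1)>0$. I would define $p(r)$ to be the angular average of $V$ on the circle of radius $r$ around $1$, and treat the radial part $p(r)$ and the non-radial residual $V(z)-p(r)$ separately.

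For the radial part, integrating $\Delta V$ over $B_\rho(1)$ and applying the divergence theorem yields
\[
2\pi \rho\, p'(\rho) = \int_{B_\rho(1)} \Delta V\, dA = O(\rho^{2\gamma_{1,1}+2}),
\]
so $p'(\rho) = O(\rho^{2\gamma_{1,1}+1}) = o(\rho^{-1})$ since $\gamma_{1,1}>-1$. Because $V\in C^{0,\alpha}$ at $z=1$, the limit $p(0):=\lim_{r\to 0}p(r)$ exists, giving $p(r)=O(1)$. The same argument applied to $v$ produces $q(r)$ with the corresponding bounds.

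For the non-radial part, write $F(w) = F(1) + G(w)$ with $G(1)=0$ and $|G(w)| \leq C|w-1|^\alpha$. Green's representation in a disk $B_\delta(1)$ gives the decomposition
\[
V(z) = V_{\mathrm{rad}}(z) + V_{\mathrm{err}}(z) + h(z),
\]
where $V_{\mathrm{rad}}$ is the Newton potential of the radial source $-4F(1)|w-1|^{2\gamma_{1,1}}$, $V_{\mathrm{err}}$ is the potential of $-4|w-1|^{2\gamma_{1,1}} G(w)$, and $h$ is harmonic in $B_\delta(1)$. Lemma~\ref{lem-radial} (after a routine rescaling) identifies $V_{\mathrm{rad}}(r) = -4F(1)(a\, r^{2\gamma_{1,1}+2} + b)$ as a bounded radial function, absorbed into $p(r)$. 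Being harmonic, $h$ is $C^\infty$ at $z=1$ and Taylor-expands as $h(z)=h(1)+c(x_1-1)+\tilde c\,x_2+O(r^2)$, which supplies the claimed linear non-radial term plus an $o(r)$ remainder.

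The main obstacle is to show the non-radial part of $V_{\mathrm{err}}$ is $o(r)$. Fourier-expanding $V_{\mathrm{err}}$ in $\theta$ centered at $z=1$, each coefficient $a_n(r)$ satisfies the ODE
\[
a_n''(r) + \tfrac{1}{r}a_n'(r) - \tfrac{n^2}{r^2}a_n(r) = g_n(r),
\]
and the H\"older decay of $G$ at $1$ implies $|g_n(r)|\le C r^{2\gamma_{1,1}+\alpha}$ for $n\neq 0$. For $|n|\ge 2$ the homogeneous solution $r^{|n|}$ dominates and contributes $O(r^2)=o(r)$. For $|n|=1$, variation of parameters with integration limits at $0$ and $\delta$ yields a particular solution of size $O(r^{2\gamma_{1,1}+\alpha+2})$, which is $o(r)$ provided $\alpha>-1-2\gamma_{1,1}$. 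This is automatic when $\gamma_{1,1}\ge -1/2$; for $\gamma_{1,1}\in(-1,-1/2)$, the threshold is reached by iterating the $W^{2,p}$ elliptic regularity estimate applied to $\Delta V=g$ a finite number of times, using $\gamma_{1,1}>-1$ to keep $g\in L^p$ for some $p>1$. Combining the pieces yields \eqref{asymp} together with the bounds \eqref{radial part}.
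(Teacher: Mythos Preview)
Your approach shares the paper's core idea: use the integral (Green) representation of $V:=u-2\gamma_1^u\log r$ together with Lemma~\ref{lem-radial} to peel off radial contributions, then bootstrap. The paper's execution is more direct. It substitutes the current expansion $U(z)=U(1)+(\text{radial powers})+O(r^\alpha)$ back into the representation \eqref{UU}, applies Lemma~\ref{lem-radial} to the radial pieces of $e^{2U-V}$, and obtains a new expansion whose non-radial error is $O(r^{\min(\alpha+\beta,1)})$ with $\beta=2(\min(\gamma_{1,1},\gamma_{1,2})+1)>0$; finitely many passes push the non-radial remainder past order~$1$, and every term produced along the way is a radial power of $r$, which is how $p(r)$ is defined.

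Your bootstrap as written has a genuine gap. Invoking ``$W^{2,p}$ elliptic regularity'' for $\Delta V$ cannot raise $\alpha$ past $2\gamma_{1,1}+2$: when $\gamma_{1,1}<-\tfrac12$ the full source $|z-1|^{2\gamma_{1,1}}F$ lies in $L^p$ only for $p<-1/\gamma_{1,1}<2$, so $W^{2,p}$ never reaches $C^1$ no matter how many times you apply it. What must be iterated is your own decomposition: after one pass you know $F(w)=(\text{radial})+O(r^{\alpha'})$; absorb the radial part into an enlarged $V_{\mathrm{rad}}$ via Lemma~\ref{lem-radial} and obtain a new $V_{\mathrm{err}}$ whose source is $O(r^{2\gamma_{1,1}+\alpha'})$, gaining $\beta$ in the non-radial exponent each time. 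That is exactly the paper's loop phrased in your variables. Two smaller corrections: the $|n|=1$ mode of $V_{\mathrm{err}}$ also carries a homogeneous $c_1 r$ piece, which contributes to the linear term (so $c,\tilde c$ are not determined by $h$ alone); and the $|n|\ge 2$ modes are not automatically $O(r^2)$---their particular solutions are $O(r^{2\gamma_{1,1}+\alpha+2})$, so they require the same threshold $\alpha>-1-2\gamma_{1,1}$ as the $|n|=1$ mode.
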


\begin{proof} Let 
\begin{gather*}
U = u - 2\g_1^u \log |z-1|,\\
V = v - 2\g_1^v \log |z-1|.
\end{gather*}
Then $U$ and $V$ satisfy
$$
\begin{cases}
\Delta U = -4|z-1|^{2\g_{1, 1}} e^{2U-V}\\
\Delta V = -4|z-1|^{2\g_{1, 2}} e^{2V-U},\quad \text{in }B_{\frac{1}{2}}(1).
\end{cases}
$$

Let $\beta = 2(\min(\g_{1, 1}, \g_{1, 2}) + 1)>0$.
By \cites{BM, CL0}, we have the following integral representation 
\begin{align}
U(z) &= -\frac{4}{2\pi}\int_{B_{\frac{1}{2}}(1)} \log|z-w| |w-1|^{2\g_{1, 1}} e^{2U(w)-V(w)}\, dA + C^1,\label{UU}\\
V(z) &= -\frac{4}{2\pi}\int_{B_{\frac{1}{2}}(1)} \log|z-w| |w-1|^{2\g_{1, 2}} e^{2V(w)-U(w)}\, dA + C^1,\nm
\end{align}
where $C^1$ stands for $C^1$ functions at point $z=1$. Furthermore, by the above works (cf. \er{important}) we know that
$$
U, V\in C^{0, \alpha}(B_{\frac{1}{2}}(1))
$$
for some positive constant $0<\alpha<\beta$. Write 
\begin{gather*}
U(z) = U(1) + O(r^\a),\\
V(z) = V(1) + O(r^\a).
\end{gather*}

Plugging these in \er{UU} and using the above lemma, we get 
\begin{align*}
U(z) &= -\frac{4}{2\pi}\int_{B_{\frac{1}{2}}(1)} \log|z-w| |w-1|^{2\g_{1, 1}} e^{2U(1)-V(1)}( 1 + O(r^\a))\, dA + C^1\\
&= U(1) + a_1 r^{2\g_{1, 1} + 2} + O(r^{\min(\a+\beta, 1)}).
\end{align*}
Similarly, we have
$$
V(z) =  V(1) + b_1 r^{2\g_{1, 2} + 2} + O(r^{\min(\a+\beta, 1)}).
$$

Plugging these back in \er{UU}, we get 
\begin{align*}
U(z) &= -\frac{4}{2\pi}\int_{B_{\frac{1}{2}}(1)} \log|z-w| |w-1|^{2\g_{1, 1}} e^{2U(1)-V(1)}\\
& \qquad ( 1 + 2a_1 r^{2\g_{1, 1} + 2} - b_1 r^{2\g_{1, 2}+2} +  O(r^{\a + \beta}))\, dA + C^1\\
&= U(1) + a_1 r^{2\g_{1, 1} + 2} + a_{2, 1} r^{2(2\g_{1, 1} + 2)} + a_{2, 2} r^{2\g_{1, 1} + 2\g_{1, 2} + 4} + O(r^{\min(\a+2\beta, 1)}),
\end{align*}
and similarly for $V(z)$. 

Continuing this way and expanding the exponential function to higher powers as needed, 
we see that all the powers with exponents less than 1 are radial because $\a + n\beta$ is eventually bigger than 1. 
We take the functions $p(r)$ and $q(r)$ to be the corresponding sums of such powers. 
\end{proof}

\begin{proposition} 
The $I_e$ in \er{ie} evaluates to 
\begin{equation}\label{ansie}
I_e = 4\pi(c\gamma_{1, 1} + d\gamma_{1, 2}),
\end{equation}
where $c$ and $d$ are from \er{asymp}. 
\end{proposition}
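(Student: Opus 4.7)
The plan is to parametrize $S_r(1)$ in polar coordinates $(r,\theta)$ centered at $z=1$, with $\nu = \hat r$ the outward unit normal of the ball $B_r(1)$ (the convention implicit in the derivation of \er{itie}, where $x=r\nu$ is used on $S_r(0)$). Using Proposition \ref{finer} I will expand
\[
u_r = \frac{2\gamma_1^u}{r} + p'(r) + c\cos\theta + \tilde c\sin\theta + o(1),\qquad \frac{u_\theta}{r} = -c\sin\theta + \tilde c\cos\theta + o(1),
\]
and analogously for $v$. Substituting $\la e,\nu\ra = \cos\theta$, $\la \nu,\nabla u\ra = u_r$, and $\la e,\nabla u\ra = u_r\cos\theta - (u_\theta/r)\sin\theta$ into the Pohozaev integrand in \er{ie}, I will recognize the seven terms as $F(u) + F(v) + G(u,v) + 4\cos\theta(e^{2u-v}+e^{2v-u})$, where
\[
F(u) = \cos\theta\bigl(u_r^2 - (u_\theta/r)^2\bigr) - 2\sin\theta\, u_r\,(u_\theta/r)
\]
and $G(u,v)$ is the corresponding bilinear cross term obtained by polarization (up to sign).

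Next I will expand each piece in $r$ and integrate over $\theta$ after multiplication by $r$ (the length element on $S_r(1)$). Three classes of terms should drop out at once. The pure logarithmic contributions of order $r^{-2}$, such as $\cos\theta\cdot 4(\gamma_1^u)^2/r^2$ inside $F(u)$, are odd in $\theta$ and integrate to zero. The radial corrections $p'(r)$ and $q'(r)$, which by \er{radial part} are $o(r^{-1})$ and in any case $\theta$-independent, appear only multiplied by $\cos\theta$ or $\sin\theta$ and so vanish upon angular integration. The exponential term $4\cos\theta(e^{2u-v}+e^{2v-u})r = O(r^{2\min(\gamma_{1,1},\gamma_{1,2})+1})$ tends to zero because $\gamma_{1,i}>-1$ by \er{gmgm}.

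The surviving contributions will be the cross products between the leading $2\gamma_1^u/r$ (respectively $2\gamma_1^v/r$) and the Cartesian linear corrections, evaluated via $\int_0^{2\pi}\cos^2\theta\,d\theta = \int_0^{2\pi}\sin^2\theta\,d\theta = \pi$ and $\int_0^{2\pi}\cos\theta\sin\theta\,d\theta = 0$. A direct bookkeeping should give
\[
\int_0^{2\pi}\!\! F(u)\,r\,d\theta \to 8\pi c\gamma_1^u,\quad \int_0^{2\pi}\!\! F(v)\,r\,d\theta \to 8\pi d\gamma_1^v,\quad \int_0^{2\pi}\!\! G(u,v)\,r\,d\theta \to -4\pi(d\gamma_1^u + c\gamma_1^v),
\]
so that $I_e = 4\pi c(2\gamma_1^u - \gamma_1^v) + 4\pi d(2\gamma_1^v - \gamma_1^u) = 4\pi(c\gamma_{1,1} + d\gamma_{1,2})$ by \er{gmgm}. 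The coefficients $\tilde c$ and $\tilde d$ will drop out automatically by the vanishing of $\int\cos\theta\sin\theta\,d\theta$, which is a reassuring internal check.

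The hard part will be clerical rather than conceptual: there are several dozen signed monomials in $\cos\theta$, $\sin\theta$, $p'(r)$, $q'(r)$, and the leading $\gamma_1^u/r$, $\gamma_1^v/r$ to keep track of, and one must verify that every singular $1/r$ piece is either odd in $\theta$ or cancels between $F$ and $G$. A secondary subtlety is the orientation convention for $\nu$: fixing $\nu = \hat r$ (consistent with \er{itie}) rather than the outward normal of the punctured domain $\Omega$ is essential to recover the asserted positive sign.
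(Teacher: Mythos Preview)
Your proposal is correct and follows essentially the same approach as the paper: polar coordinates at $z=1$, substitution of the asymptotics from Proposition~\ref{finer}, and identification of the surviving $O(r^{-1})$ cross terms via $\int\cos^2\theta=\int\sin^2\theta=\pi$. The paper computes the seven terms in \er{ie} one by one, whereas you group them as $F(u)+F(v)+G(u,v)$; this is a cosmetic difference only.

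One small imprecision: you assert that every occurrence of $p'(r)$ or $q'(r)$ is multiplied by a single $\cos\theta$ or $\sin\theta$ and hence dies upon angular integration. This is not literally true: in $\cos\theta\,u_r^2$ and in $-2\sin\theta\,u_r(u_\theta/r)$ the cross term $p'(r)\cdot(c\cos\theta+\tilde c\sin\theta)$ produces pieces like $p'(r)\cos^2\theta$ and $p'(r)\sin^2\theta$ whose angular integrals are nonzero. These terms still vanish in the limit, but for the reason you already flagged, namely $r\,p'(r)=o(1)$ by \er{radial part}; just make that the stated justification rather than angular cancellation.
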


\begin{proof}
Let $\nu$ be the outward unit normal of $S_r(1)$, and let $\t e = (0, 1)$. 
 Using polar coordinates with $x_1=1+r\cos\theta$ and $x_2=\sin\theta$, we have $\la e, \nu\ra = \cos\theta$ and $\la \t e, \nu\ra = \sin\theta$. 

We first show that the last integral in \er{ie} converges to zero:
\begin{align*}
\int_{S_r(1)} \la e, \nu\ra e^{2u-v}\, ds &= \int_0^{2\pi} r (\cos\theta) \exp(2\g_{1, 1}\log r + (2p-q) + O(r))\, d\theta \\
&= \int_0^{2\pi} r (\cos\theta) r^{2\gamma_{1, 1}} e^{2p-q}  e^{O(r)}\, d\theta\\
&= \int_0^{2\pi} r (\cos\theta) r^{2\gamma_{1, 1}}  e^{2p-q}  {(1+O(r))})\, d\theta\\
&= 0 + \int_0^{2\pi} O(r^{2+2\gamma_{1, 1}})\, d\theta\\
&\to 0,\quad \text{as }r\to 0,
\end{align*}
since $\gamma_{1, 1}>-1$. Similarly the other integral for $e^{2v-u}$ also goes to zero. 

From \er{asymp}, we see that  as $r\to 0$, 
\begin{equation}
\label{uvexp}
\begin{split}
\nabla u &= \frac{2\gamma_1^u }{r} \nu + p'(r) \nu + c e + \t c \t e+ o(1),\\
\nabla v &= \frac{2\gamma_1^v }{r} \nu + q'(r) \nu + d e + \t d \t e + o(1).
\end{split}
\end{equation}
Now we compute the integrals of the first two terms in \er{ie} and the others are similar. For the first term, by \er{radial part} we have 
\begin{align*}
&\int_{S_r(1)} 2\la e, \nabla u\ra \la \nu, \nabla u\ra\, ds \\
&= 2\int_0^{2\pi} r\Big(\frac{2\g_1^u}{r}\cos\theta + p'(r)\cos\theta + c + o(1)\Big)
\Big(\frac{2\g_1^u}{r} + p'(r) + c\cos\theta + \t c\sin\theta + o(1)\Big)\, d\theta\\
&= 2\int_0^{2\pi} r\Big(P_1(r)\cos\theta + P_2(r)\cos\theta \sin\theta+ P_3(r)\sin\theta + \frac{2c\g_1^u}{r}\cos^2\theta +  \frac{2c\g_1^u}{r} + o(r^{-1})\Big)\, d\theta\\
&\to 2\pi\cdot 2\cdot (3c\g_1^u),\quad \text{as }r\to 0.
\end{align*}
Here the $P_i(r)$ stand for the function of $r$ after expansion. For the second term, we have 
\begin{align*}
&-\int_{S_r(1)} \la e, \nu\ra |\nabla u|^2\, ds\\
&= -\int_0^{2\pi} r\cos\theta\Big( \big(\frac{2\g_1^u}{r} + p'(r)\big)^2 + c^2 + \t c^2 +  2\big(\frac{2\g_1^u}{r} + p'(r)\big)(c\cos\theta + \t c\sin\theta)+ o(r^{-1})\Big)\, d\theta\\
&= -\int_0^{2\pi} r\Big( P(r)\cos\theta + Q(r)\cos\theta \sin\theta + \frac{4c\g_1^u}{r}\cos^2\theta + o(r^{-1})\Big)\, d\theta\\
&\to -2\pi( 2c\g_1^u), \quad \text{as }r\to 0.
\end{align*}

Repeating this for the other terms, we see that 
\begin{align*}
I_e &= 2\pi( 6c\g_1^u - 2c\g_1^u + 6d\g_1^v - 2d\g_1^v - (2d\g_1^u + c\g_1^v) - (2c\g_1^v + d\g_1^u) + (d\g_1^u + c\g_1^v))\\
&= 4\pi( c(2\g_1^u - \g_1^v) + d(2\g_1^v - \g_1^u)) = 4\pi( c\g_{1, 1} + d\g_{1, 2}).
\end{align*}
\end{proof}

Furthermore, using the asymptotic expansion \er{asymp}, we can determine $F$ in \er{eqsu3} and thus determine all the coefficients in view of Proposition \ref{6of7}.  
\begin{proposition} The real and imaginary parts of the coefficient $F$ in \er{eqsu3} are  
\begin{align}\label{for F}
\begin{split}
\on{Re} F &= \frac{1}{2}( 2c\g_1^u - c\g_1^v - c(\g_1^{v})^2 - 2d\g_1^u\g_1^v + d(\g_1^u)^2 + 2c\g_1^u\g_1^v  )\\
&= \frac{1}{2} (c(\g_1^v + 1) \g_{1, 1} - d\g_1^u \g_{1, 2}), \\
\on{Im} F &= -\frac{1}{2} (\t c(\g_1^v + 1) \g_{1, 1} - \t d\g_1^u \g_{1, 2}).
\end{split}
\end{align}
\end{proposition}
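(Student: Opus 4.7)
The plan is to read off $F$ as the $(z-1)^{-2}$ coefficient in the Laurent expansion of the rational function $W_3$ at $z=1$; equivalently, $F=\on{Res}_{z=1}[(z-1)W_3]$. Since $W_3$ depends only on $z$ by \er{char prop}, this residue can be isolated by Fourier integration over a small circle,
\[
F = \frac{\epsilon^2}{2\pi}\int_0^{2\pi} e^{2i\theta}\, W_3\bigl(1+\epsilon e^{i\theta}\bigr)\, d\theta,\qquad 0<\epsilon<1.
\]
I will plug into this integral the explicit expansion
\[
W_3 = u_{zzz} - 2u_z u_{zz} + u_z v_{zz} + u_z^2 v_z - u_z v_z^2
\]
of the factorised operator in \er{eqsu3}, use the asymptotic expansion of Proposition \ref{finer} for $u$ and $v$, and then pass to the limit $\epsilon\to 0$.

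Writing $\zeta=z-1$, $a=\g_1^u$, $b=\g_1^v$, $C=(c-i\t c)/2$ and $D=(d-i\t d)/2$, the logarithmic and linear parts of $u,v$ contribute the purely holomorphic pieces
\[
u_z=\tfrac{a}{\zeta}+C,\ \ v_z=\tfrac{b}{\zeta}+D,\ \ u_{zz}=-\tfrac{a}{\zeta^2},\ \ v_{zz}=-\tfrac{b}{\zeta^2},\ \ u_{zzz}=\tfrac{2a}{\zeta^3},
\]
plus corrections coming from the radial parts $p(r),q(r)$ and from the $o(r)$ remainder. A routine algebraic expansion of $W_3$ using these holomorphic pieces isolates the $\zeta^{-2}$ contribution $2aC$ from $-2u_zu_{zz}$, $-bC$ from $u_zv_{zz}$, $a^2D+2abC$ from $u_z^2v_z$, and $-b^2C-2abD$ from $-u_zv_z^2$; summing and factoring yields
\[
F = C(1+b)(2a-b) - D\,a(2b-a) = C(1+\g_1^v)\,\g_{1,1} - D\,\g_1^u\,\g_{1,2},
\]
which, on splitting via $C=(c-i\t c)/2$ and $D=(d-i\t d)/2$, is exactly the right-hand side of \er{for F}. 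As a sanity check, the analogous collection at order $\zeta^{-3}$ reproduces $G$ in \er{the G}.

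The main obstacle is to justify rigorously that the non-holomorphic corrections contribute nothing in the limit $\epsilon\to 0$. The key observation is that $\p_z=\tfrac12 e^{-i\theta}\bigl(\p_r - \tfrac{i}{r}\p_\theta\bigr)$, so each $\p_z$ applied to a radial monomial $r^\alpha$ lies in the Fourier mode $e^{-i\theta}$ with an extra factor $r^{-1}$; iterating, $\p_z^k r^\alpha$ lies in mode $e^{-ik\theta}$ with magnitude $O(\epsilon^{\alpha-k})$. Since the radial summands in $p,q$ constructed in the proof of Proposition \ref{finer} are finitely many power functions with exponents strictly between $0$ and $1$, and the $o(r)$ remainder in $u,v$ is $O(r^{1+\eta})$ for some $\eta>0$, a mode-and-order bookkeeping shows that every term of $W_3$ involving at least one non-holomorphic factor either lies in a Fourier mode different from $e^{-2i\theta}$, or carries an $\epsilon$-prefactor $O(\epsilon^{-2+\delta})$ with $\delta>0$; after multiplication by $\epsilon^2$ and passage to the limit, all such contributions vanish. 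The a priori holomorphy $W_{3,\bar z}=0$ guarantees that these corrections must cancel; the explicit estimate is the substantive step that turns the formal computation into a proof.
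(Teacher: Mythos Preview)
Your proof is correct and follows essentially the same route as the paper's: both identify $F$ as the coefficient of $(z-1)^{-2}$ in the Laurent expansion of $W_3$ at $z=1$, insert the asymptotics from Proposition~\ref{finer}, carry out the identical algebraic extraction from the explicit formula $W_3=u_{zzz}-2u_zu_{zz}+u_zv_{zz}+u_z^2v_z-u_zv_z^2$, and then argue that the radial and remainder contributions drop out. The only difference is in the packaging of that last step: the paper simply notes that the little-$o$ errors are one pole order too small and that the $\p_z^k p,\p_z^k q$ pieces cannot contribute because $W_3$ is meromorphic while $p,q$ are radial; you recast this as a Fourier--mode and order-of-$\epsilon$ bookkeeping on the circle $|z-1|=\epsilon$. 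Your formulation is a bit more explicit about why each cross term vanishes in the limit, but it relies on the same two inputs (holomorphy of $W_3$ and the growth control on the remainder) and reaches the same conclusion by the same mechanism. One small caution: your assertion that the $o(r)$ remainder is in fact $O(r^{1+\eta})$ with controlled $z$-derivatives is not stated as such in Proposition~\ref{finer}; the paper instead records the derivative errors directly as $o(1),\,o(r^{-1}),\,o(r^{-2})$, which is what the pole-order argument actually needs.
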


\begin{proof} By direct computation of \er{eqsu3}, we have 
\begin{equation}\label{explicit W}
W_3 = u_{zzz} - 2u_{zz} u_z + v_{zz} u_z - u_z v_z^2 + u_z^2 v_z.
\end{equation}
This is equal to $\frac{D}{z^3} + \frac{E}{z^2(z-1)}
+ \frac{F}{z(z-1)^2} + \frac{G}{(z-1)^3}$, whose Laurent expansion at $z=1$ is 
$$
 \frac{G}{(z-1)^3} + \frac{F}{(z-1)^2} + \frac{-F + E}{z-1} + (F - 2E + D) + O(|z-1|).
$$
Therefore $F$ is the coefficient of $(z-1)^{-2}$ in \er{explicit W}. 

Using the asymptotic expansion \er{asymp}, we see 
\begin{align*}
u_z &= \frac{\gamma_1^u}{z-1} + \frac{c}{2} - \frac{\t c}{2}i + \p_z p(r) + o(1),\\
u_{zz} &= -\frac{\gamma_1^u}{(z-1)^2} + \p_{z}^2 p(r) + o(r^{-1}),\\
u_{zzz} &= 2\frac{\gamma_1^u}{(z-1)^3} + \p_z^3 p(r) + o(r^{-2}),\\
v_z &= \frac{\gamma_1^v}{z-1} + \frac{d}{2} - \frac{\t d}{2}i + \p_z q(r) + o(1).
\end{align*}

In computing the term containing $(z-1)^{-2}$, we now argue that the error terms and the derivatives of $p$ or $q$ can be neglected. 
The $o(1)$, $o(r^{-1})$ and $o(r^{-2})$ can be omitted by the reason of pole orders, since they are little $o$ of a term whose pole order is one less than the corresponding highest pole order. 
The derivatives of $p$ and $q$ can be omitted since $W_3$ is meromorphic but $p$ and $q$ are functions of $r = |z-1|$. 
Therefore, direct computation gives $F$ as in \er{for F}, 
noting that the second line is a simplification of the first in view of \er{gmgm}. (We remark that $G$ can be computed as the coefficient of $(z-1)^{-3}$ in \er{explicit W} and this recovers \er{the G}.) 
\end{proof}

\begin{proof}[Proof of Theorem \ref{thm-su3}] We first compute equation \er{eq g} in the form \er{still rational} in the current case using \er{iter}, and the result is 
{\allowdisplaybreaks
\begin{align}
&\Big(\p_z - \frac{\tau}{z-1}\Big)^3 + \Big(\frac{A}{z^2} + \frac{B}{z(z-1)} + \frac{C}{(z-1)^2}\Big)\Big(\p_z - \frac{\tau}{z-1}\Big) \nm\\
&+ \Big(\frac{D}{z^3} + \frac{E}{z^2(z-1)}
+ \frac{F}{z(z-1)^2} + \frac{G}{(z-1)^3}\Big)\nm\\
&\quad=\p_z^3 - \frac{3\tau}{z-1}\p_z^2 + \Big(\frac{A}{z^2} + \frac{B}{z(z-1)} + \frac{C+3\tau^2+ 3\tau}{(z-1)^2}\Big)\p_z\nm \\
&\qquad+ \Big(\frac{D}{z^3} + \frac{E-A\tau}{z^2(z-1)}
+ \frac{F-B\tau}{z(z-1)^2} + \frac{G-C\tau-\tau^3-3\tau^2-2\tau}{(z-1)^3}\Big) \label{shift-su3}\\
&\quad=: \p_z^3 - \frac{3\tau}{z-1}\p_z^2 + \Big(\frac{A'}{z^2} + \frac{B'}{z(z-1)} + \frac{C'}{(z-1)^2}\Big)\p_z\nm \\
&\qquad+ \Big(\frac{D'}{z^3} + \frac{E'}{z^2(z-1)}
+ \frac{F'}{z(z-1)^2} + \frac{G'}{(z-1)^3}\Big),\nm
\end{align}
}
where the new coefficients are denoted with primes. To prove that this equation is hypergeometric, we need to show that the coefficients have simple poles at $z=1$, that is, 
$C'=G'=F'=0$. 

The necessary conditions in Theorem \ref{necessary} (2) in the $SU(3)$ cases are 
\begin{equation}\label{su3 cases}
0)\ \g_{1, 2}=0,\quad \text{or} \quad 1)\ \g_{1, 1} + \g_{1, 2} = -1,\quad \text{or} \quad 2)\ \g_{1, 1}=0
\end{equation}
Let us study cases 1) and 2) first. By \er{l&f} and \er{cond0}, we see $\tau = \g_1^u$. Then it can be computed that $C'=0$ and $G'=0$ in both cases. We remark that this can be seen using just the exponents of \er{shift-su3} at $z=1$, which by the assumption are $0, 1$ and one more exponent. By the standard way of computing the exponents (see \cite{Ince}), we see that they are solutions of 
$$
s(s-1)(s-2) - 3\tau s(s-1) + C's + G' = 0,
$$
which implies that $G'=0$ and $C'=0$. 

Now we prove that $F' = F - B\tau = 0$. A straight computation from \er{for B}, \er{for AC}, and \er{the rho8} shows that 
\begin{align*}
B &= -A - C - 2 + \rho_1 \rho_2 + \rho_1\rho_3 + \rho_2\rho_3\\
&= \g_0^u + \g_0^v + (\g_0^{u})^2 + (\g_0^{v})^2 - \g_0^u\g_0^v\\
& + \g_1^u + \g_1^v + (\g_1^u)^2 + (\g_1^v)^2 - \g_1^u\g_1^v\\
& + \ginf^u + \ginf^v - (\ginf^{u})^2 - (\ginf^{v})^2 + \ginf^u \ginf^v.
\end{align*}

Combining \er{ansie}, \er{itie}, and \er{su3int}, we see that 
\begin{equation}\label{cdgB}
\begin{split}
-\frac{1}{2}(c\g_{1, 1} + d\g_{1, 2}) &= -\frac{1}{8\pi} I_e\\
&= -((\ginf^{u})^2 + (\ginf^{v})^2 -\ginf^u \ginf^v) + (\g_0^{u})^2 + (\g_0^{v})^2 - \g_0^u \g_0^v\\
& + (\g_1^u)^2 + (\g_1^v)^2 - \g_1^u \g_0^v + \g_0^u + \g_1^u + \ginf^u + \g_0^v + \g_1^v + \ginf^v\\
& = B.
\end{split}
\end{equation}

Now we work under the assumption that $F$ is real. 
In case 1) that $\gamma_{1, 1} + \gamma_{1, 2}=-1$, which is equivalent to $\g_1^u + \g_1^v = -1$, 
we see from \er{for F} and \er{cdgB} that 
$$
F = \on{Re} F = -\frac{1}{2}\g_1^u (c\g_{1, 1} + d\g_{1, 2}) = B\tau. 
$$

In case 2) that $\g_{1, 1}=0$, similarly we have 
$$
F = \on{Re} F = -\frac{1}{2}\g_1^u(d\g_{1, 2}) = -\frac{1}{2}\g_1^u(c\g_{1, 1} + d\g_{1, 2}) = B\tau.
$$

Finally we consider case 0) that $\g_{1, 2}=0$, which means that $\g_1^u = 2\g_1^v$. 
By \er{1neq0}, we have $\tau = \g_1^u - \mu_{1, 1} = \g_1^u - (2\g_1^u - \g_1^v) - 1 = -(\g_1^v + 1)$. 
Again $C'$ and $G'$ are easily seen to be 0. From \er{for F} and \er{cdgB}, we see that 
$$
F = \on{Re} F = \frac{1}{2}(\g_1^v + 1) (c\g_{1, 1}) = \frac{1}{2}( c\g_{1, 1} + d\g_{1, 2}) (\g_1^v + 1) = B\tau
$$
for the above $\tau$. Therefore all three cases are proved to be sufficient. 
\end{proof}

Finally, we show that our reality assumption is satisfied for solutions with symmetry in accordance with that the singular sources are on the real line. 
\begin{proposition}\label{prop-real} If the solution satisfies $u(x_1, x_2) = u(x_1, -x_2)$, then the coefficients $E$ and $F$ in \er{eqsu3} are real. 
\end{proposition}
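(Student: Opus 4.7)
The plan is to use the reflection symmetry to show that $W_3$ takes real values on the real axis; the reality of $E$ and $F$ will then follow by matching the partial-fraction decomposition in \er{eqsu3} against the Laurent expansions of $W_3$ at $0$ and $1$.

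First I would observe that the assumed symmetry of $u$ propagates to $v$. Applying $x_2 \mapsto -x_2$ to the first equation of \er{su3} preserves $\Delta u$ and the right-hand side (the sources sit at $0$ and $1$, both real), so $e^{2u-v}$ is symmetric, and since $u$ already is, so is $v$. Thus both $u$ and $v$ are real-valued and symmetric in $x_2$.

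Next I would translate the symmetry into a transformation law for $\p_z$-derivatives. For a real-valued $f$ with $f(x_1, x_2) = f(x_1, -x_2)$, the definition $\p_z = \tfrac{1}{2}(\p_{x_1} - i\p_{x_2})$ together with the sign change of $\p_{x_2}$ under reflection gives $f_z(\bar z) = f_{\bar z}(z) = \overline{f_z(z)}$, the last equality using reality of $f$. Iterating (on $f_z$, whose real and imaginary parts are respectively symmetric and antisymmetric in $x_2$) yields $\p_z^k f(\bar z) = \overline{\p_z^k f(z)}$ for all $k \geq 1$, applied to both $u$ and $v$.

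Plugging these identities into the explicit formula \er{explicit W}, whose numerical coefficients are all real, immediately gives $W_3(\bar z) = \overline{W_3(z)}$, so that $W_3$ is real on $\br \setminus \{0, 1\}$. Since $W_3$ is meromorphic on $\bp^1$ with poles only at the real points $0$ and $1$, its Laurent coefficients at these points must be real. Expanding the right-hand side of \er{eqsu3} around $z = 0$ one finds that the coefficients of $z^{-3}$ and $z^{-2}$ in the Laurent series equal $D$ and $-E$; expanding around $z = 1$ in $w = z - 1$ shows that the coefficients of $w^{-3}$ and $w^{-2}$ equal $G$ and $F$. Hence $E$ and $F$ are real, as claimed (and in fact $D$ and $G$ are too, as already visible from \er{for AC} and \er{the G}). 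The only step requiring a touch of care is the conjugation rule for the higher derivatives, but once the $k = 1$ case is settled, induction handles it without difficulty, so no serious obstacle arises.
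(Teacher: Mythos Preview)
Your argument is correct and takes a genuinely different route from the paper's. The paper works with the ODE \er{eqsu3} applied to $f=e^{-u}$: since $f$ inherits the $x_2$-reflection symmetry, all odd $x_2$-derivatives of $f$ vanish on the real axis, so restricting the equation to $x_2=0$ yields a real relation in which the only possibly non-real coefficients are $E$ and $F$ (the reality of $A,B,C,D,G$ having already been established via the exponent computations in Proposition~\ref{6of7}); separating the imaginary part then forces $\on{Im}E=\on{Im}F=0$. Your approach instead works directly with the invariant $W_3$: you first propagate the symmetry from $u$ to $v$ through the first equation of \er{su3}, then derive the conjugation rule $\p_z^k u(\bar z)=\overline{\p_z^k u(z)}$ (and likewise for $v$) by induction, feed this into \er{explicit W} to obtain the Schwarz reflection identity $W_3(\bar z)=\overline{W_3(z)}$, and read off the reality of the Laurent coefficients at the real poles $0$ and $1$. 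Your method is self-contained in that it does not rely on Proposition~\ref{6of7} and in fact recovers the reality of $D$ and $G$ as a byproduct; the paper's method is shorter and never needs to know that $v$ is symmetric, but leans on the prior exponent calculation.
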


\begin{proof} 
The symmetry condition for $u$ implies that $f = e^{-u}$ is also symmetric, that is, $f(x_1, x_2) = f(x_1, -x_2)$. 
Therefore,  
$$
\p_{x_2}f\Big|_{(x_1, 0)} = 0,\quad \p_{x_2}\p_{x_1}^2 f\Big|_{(x_1, 0)} = 0,\quad \p_{x_2}^3 f\Big|_{(x_1, 0)} = 0.
$$
We consider the restriction of the ODE operator \er{eqsu3} to real independent variable, that is, $x_2 = 0$ and $z=x_1$. 
Thus in view of $\p_z = \frac{1}{2}(\p_{x_1} - i\p_{x_2})$, we see that
\begin{multline*}
\bigg(\frac{1}{8}(\p_{x_1}^3 - 3\p_{x_1}\p_{x_2}^2) + \frac{1}{2}\Big(\frac{A}{x_1^2} + \frac{B}{x_1(x_1-1)} + \frac{C}{(x_1-1)^2}\Big)\p_{x_1}\\
 + \Big(\frac{D}{x_1^3} + \frac{E}{x_1^2(x_1-1)}
+ \frac{F}{x_1(x_1-1)^2} + \frac{G}{(x_1-1)^3}\Big)\bigg) f\bigg|_{(x_1, 0)} = 0.
\end{multline*}
The function $f$ is real-valued, and by Proposition \ref{6of7}, $A$, $B$, $C$, $D$ and $G$ are real. Suppose $E=E_1 +i E_2$ and $F=F_1 + i F_2$ (actually $F_2 = -E_2$ by \er{EF}), then the imaginary part of the above equation gives
$$
\Big(\frac{E_2}{x_1^2(x_1-1)} + \frac{F_2}{x_1(x_1-1)^2} \Big) f(x_1, 0) = 0,\quad \forall x_1\neq 0, 1.
$$
Therefore $E_2 = F_2 = 0$, and $E$ and $F$ are real. 
\end{proof}

\begin{bibdiv}
\begin{biblist}

\bib{Feher}{article}{
   author={Balog, J.},
   author={Feh{\'e}r, L.},
   author={O'Raifeartaigh, L.},
   author={Forg{\'a}cs, P.},
   author={Wipf, A.},
   title={Toda theory and $\scr W$-algebra from a gauged WZNW point of view},
   journal={Ann. Physics},
   volume={203},
   date={1990},
   number={1},
   pages={76--136},
   issn={0003-4916},
}

\bib{B}{article}{
   author={Beukers, Frits},
   title={Hypergeometric functions, how special are they?},
   journal={Notices Amer. Math. Soc.},
   volume={61},
   date={2014},
   number={1},
   pages={48--56},
   issn={0002-9920},
}

\bib{BH}{article}{
   author={Beukers, F.},
   author={Heckman, G.},
   title={Monodromy for the hypergeometric function $_nF_{n-1}$},
   journal={Invent. Math.},
   volume={95},
   date={1989},
   number={2},
   pages={325--354},
   issn={0020-9910},
}

\bib{BM}{article}{
   author={Brezis, H.},
   author={Merle, F.},
   title={Uniform estimates and blow-up behavior for solutions of $-\Delta u=V(x)e^u$ in two dimensions},
   journal={Comm. Partial Differential Equations},
   volume={16},
   date={1991},
   number={8-9},
   pages={1223--1253},
   issn={0360-5302},
}

\bib{CL0}{article}{
   author={Chen, Wen Xiong},
   author={Li, Congming},
   title={Classification of solutions of some nonlinear elliptic equations},
   journal={Duke Math. J.},
   volume={63},
   date={1991},
   number={3},
   pages={615--622},
   issn={0012-7094},
}

\bib{CL}{article}{
   author={Chen, Wen Xiong},
   author={Li, Congming},
   title={What kinds of singular surfaces can admit constant curvature?},
   journal={Duke Math. J.},
   volume={78},
   date={1995},
   number={2},
   pages={437--451},
   issn={0012-7094},
}

\bib{Doliwa}{article}{
   author={Doliwa, Adam},
   title={Holomorphic curves and Toda systems},
   journal={Lett. Math. Phys.},
   volume={39},
   date={1997},
   number={1},
   pages={21--32},
   issn={0377-9017},
}

\bib{Erem}{article}{
   author={Eremenko, A.},
   title={Metrics of positive curvature with conic singularities on the
   sphere},
   journal={Proc. Amer. Math. Soc.},
   volume={132},
   date={2004},
   number={11},
   pages={3349--3355 (electronic)},
   issn={0002-9939},
}

\bib{FUY}{article}{
   author={Fujimori, Shoichi},
   author={Kawakami, Yu},
   author={Kokubu, Masatoshi},
   author={Rossman, Wayne},
   author={Umehara, Masaaki},
   author={Yamada, Kotaro},
   title={CMC-1 trinoids in hyperbolic 3-space and metrics of constant
   curvature one with conical singularities on the 2-sphere},
   journal={Proc. Japan Acad. Ser. A Math. Sci.},
   volume={87},
   date={2011},
   number={8},
   pages={144--149},
   issn={0386-2194},
}

\bib{GH}{book}{
   author={Griffiths, Phillip},
   author={Harris, Joseph},
   title={Principles of algebraic geometry},
   note={Pure and Applied Mathematics},
   publisher={Wiley-Interscience [John Wiley \& Sons], New York},
   date={1978},
   pages={xii+813},
}

\bib{Guest}{book}{
   author={Guest, Martin A.},
   title={Harmonic maps, loop groups, and integrable systems},
   series={London Mathematical Society Student Texts},
   volume={38},
   publisher={Cambridge University Press, Cambridge},
   date={1997},
   pages={xiv+194},
   isbn={0-521-58932-0},
}


\bib{Ince}{book}{
   author={Ince, E. L.},
   title={Ordinary Differential Equations},
   publisher={Dover Publications, New York},
   date={1944},
   pages={viii+558},
}

\bib{KMPS}{article}{
   author={Korevaar, Nick},
   author={Mazzeo, Rafe},
   author={Pacard, Frank},
   author={Schoen, Richard},
   title={Refined asymptotics for constant scalar curvature metrics with
   isolated singularities},
   journal={Invent. Math.},
   volume={135},
   date={1999},
   number={2},
   pages={233--272},
   issn={0020-9910},
}

\bib{LS-book}{book}{
   author={Leznov, A. N.},
   author={Saveliev, M. V.},
   title={Group-theoretical methods for integration of nonlinear dynamical
   systems},
   series={Progress in Physics},
   volume={15},
   note={Translated and revised from the Russian;
   Translated by D. A. Leuites},
   publisher={Birkh\"auser Verlag},
   place={Basel},
   date={1992},
   pages={xviii+290},
   isbn={3-7643-2615-8},
}


\bib{LWY}{article}{
   author={Lin, Chang-Shou},
   author={Wei, Juncheng},
   author={Ye, Dong},
   title={Classification and nondegeneracy of $SU(n+1)$ Toda system with
   singular sources},
   journal={Invent. Math.},
   volume={190},
   date={2012},
   number={1},
   pages={169--207},
   issn={0020-9910},
}

\bib{Liouville}{article}{
   author={Liouville, J.},
   title={Sur l'\'equation aux diff\'erences partielles $\frac{d^2}{dudv}\log\lambda \pm \frac{\lambda}{2a^2}=0$},
   journal={J. Math. Pures Appl.},
   volume={18},
   date={1853},
   pages={71--72},
}

\bib{LT}{article}{
   author={Luo, Feng},
   author={Tian, Gang},
   title={Liouville equation and spherical convex polytopes},
   journal={Proc. Amer. Math. Soc.},
   volume={116},
   date={1992},
   number={4},
   pages={1119--1129},
   issn={0002-9939},
}

\bib{N1}{article}{
   author={Nie, Zhaohu},
   title={Solving Toda field theories and related algebraic and differential
   properties},
   journal={J. Geom. Phys.},
   volume={62},
   date={2012},
   number={12},
   pages={2424--2442},
   issn={0393-0440},
}

\bib{N2}{article}{
   author={Nie, Zhaohu},
   title={On characteristic integrals of Toda field theories},
   journal={J. Nonlinear Math. Phys.},
   volume={21},
   date={2014},
   number={1},
   pages={120--131},
   issn={1402-9251},
}

\bib{BC}{article}{
   author={Nie, Zhaohu},
   title={Classification of solutions to Toda systems of types C and B with
   singular sources},
   journal={Calc. Var. Partial Differential Equations},
   volume={55},
   date={2016},
   number={3},
   pages={55:53},
   issn={0944-2669},
}

\bib{Taran}{book}{
   author={Tarantello, Gabriella},
   title={Selfdual gauge field vortices},
   series={Progress in Nonlinear Differential Equations and their
   Applications, 72},
   note={An analytical approach},
   publisher={Birkh\"auser Boston, Inc., Boston, MA},
   date={2008},
   pages={xiv+325},
   isbn={978-0-8176-4310-2},
}

\bib{Troy}{article}{
   author={Troyanov, Marc},
   title={Prescribing curvature on compact surfaces with conical
   singularities},
   journal={Trans. Amer. Math. Soc.},
   volume={324},
   date={1991},
   number={2},
   pages={793--821},
   issn={0002-9947},
}

\bib{Yang}{book}{
   author={Yang, Yisong},
   title={Solitons in field theory and nonlinear analysis},
   series={Springer Monographs in Mathematics},
   publisher={Springer-Verlag, New York},
   date={2001},
   pages={xxiv+553},
   isbn={0-387-95242-X},
}

\end{biblist}
\end{bibdiv}

\bigskip
\end{document}